\newcommand{\Zz}{\mathbb{Z}}
\newcommand{\Cc}{\mathbb{C}}
\newcommand{\Pp}{\mathbb{P}}
\newcommand{\Qq}{\mathbb{Q}}
\newcommand{\Hom}{\operatorname{Hom}}
\newcommand{\la}{\langle}
\newcommand{\ra}{\rangle}
\newcommand{\ii}{{\mathrm i}}
\newcommand{\Oo}{\mathcal{O}}
\newcommand{\Tt}{\mathcal{T}}
\def\Aut{\mathop{\mathrm{Aut}}\nolimits}
\def\dim{\mathop{\mathrm{dim}}\nolimits}
\def\PU{\mathop{\mathrm{PU}}}
\def\Hom{\mathop{\mathrm{Hom}}\nolimits}
\def\mod{\mathop{\mathrm{mod}}\nolimits}
\def\Pic{\mathop{\mathrm{Pic}}}
\def\Tor{\mathop{\mathrm{Tor}}\nolimits}
\newtheorem*{rep@theorem}{\rep@title}
\newcommand{\newreptheorem}[2]{%
\newenvironment{rep#1}[1]{%
 \def\rep@title{#2 \ref{##1}}%
 \begin{rep@theorem}}%
 {\end{rep@theorem}}}
\newtheorem{thm-int}{Theorem}
\theoremstyle{definition}
\newtheorem{Def-s}[Thm]{Definition}
\def\fake{{\rm fake}}
\newtheorem{theorem}{Theorem}[section]
\newtheorem{lemma}[theorem]{Lemma}
\newtheorem{proposition}[theorem]{Proposition}
\newtheorem{remark}[theorem]{Remark}
\numberwithin{equation}{section}
\begin{document}

\title{Explicit equations of a fake projective plane}

\begin{abstract}
Fake projective planes are smooth complex surfaces of general type with Betti numbers equal to those of the usual projective plane.
They come in complex conjugate pairs and have been classified as quotients of the two-dimensional ball by explicitly written arithmetic
subgroups. In this paper we find equations of a projective model of a conjugate pair of fake projective planes by studying the geometry of
the quotient of such surface by an order seven automorphism.
\end{abstract}

%\date{\today}
\date{April, 2019}
\author{Lev A. Borisov}
\address{Department of Mathematics\\
Rutgers University\\
Piscataway, NJ 08854} \email{borisov@math.rutgers.edu}
\author{JongHae Keum}
\address{School of Mathematics\\
Korea Institute for Advanced Study\\ Seoul 02455 Korea}
\email{jhkeum@kias.re.kr}
\thanks{Borisov was partially supported by the NSF Grants DMS-1201466 and DMS-1601907. Keum was supported by the National Research Foundation of Korea (NRF 2019R1A2C3010487).}

\maketitle

\section{Introduction}
A compact complex surface with the same Betti numbers as the usual
complex projective plane is  called {\it a fake projective plane} if it is not
isomorphic to  the complex projective plane.
A fake projective plane has ample canonical divisor, so it is a smooth (and geometrically connected proper) surface of general type
with geometric genus $p_g=0$ and self-intersection of canonical class $K^2=9$ (this definition extends to arbitrary characteristic).
The existence of a fake projective plane was first proved by Mumford \cite{Mum}. His method was based on the theory of $2$-adic uniformization, and led Ishida and Kato  \cite{IK} to prove the existence of two more  in the $2$-adic approach, recently Allcock and Kato \cite{AK} another from a lattice with torsion.
The second author \cite{K06}
gave a construction of a fake projective plane as a Galois cover of { a singular model of Ishida} elliptic surface which, as described by Ishida \cite{Ish}, is covered (non-Galois) by { Mumford fake projective plane. }

Fake projective planes have Chern numbers $c_1^2=3c_2=9$ and are complex 2-ball quotients by Aubin \cite{Aubin} and Yau \cite{Yau}.  Such ball quotients are strongly rigid by Mostow's rigidity theorem \cite{Mos}, i.e., determined by fundamental group up to
holomorphic or anti-holomorphic isomorphism. Fake projective planes come in complex conjugate pairs by Kharlamov-Kulikov \cite{kk} and
have been classified as quotients of the two-dimensional complex ball by explicitly written co-compact torsion-free arithmetic subgroups of $\PU(2,1)$ by Prasad-Yeung \cite{PY} and Cartwright-Steger \cite{CS}, \cite{CS2}. The arithmeticity of their fundamental groups was proved by Klingler \cite{Kl}. There are exactly 100 fake projective planes total, corresponding to 50 distinct fundamental groups. Cartwright and Steger also computed the automorphism group of each fake projective plane $X$, which is
given by $\Aut(X)\cong N(X)/\pi_1(X),$ where $N(X)$ is the
normalizer of $\pi_1(X)$ in its maximal arithmetic subgroup of $\PU(2,1)$.
 In particular $\Aut(X)\cong \{1\}$, $\Zz_3$, $\Zz_3^2$ or $G_{21}$  where $\Zz_n$ is the cyclic group of order $n$ and $G_{21}$ is the unique non-abelian group of
order $21$. Among the 50 pairs exactly 33 admit non-trivial automorphisms: 3 pairs have $\Aut\cong G_{21}$, 3 pairs have $\Aut\cong \Zz_3^2$ and 27 pairs have $\Aut\cong \Zz_3$.
{  It turns out, for example,  that  Mumford fake plane and Keum fake plane have fundamental groups in the same maximal arithmetic subgroup of  $\PU(2,1)$, but the former has $\Aut\cong \{1\}$ and the latter $\Aut\cong G_{21}$.}

{ On the other hand, in \cite{K08} all possible quotients of fake projective planes were classified, e.g., the $\Zz_7$-quotient of a fake projective plane with $\Aut\cong G_{21}$ is a singular model of an elliptic surface with two multiple fibres and one $I_9$-fibre;  the three pairs of fake projective planes with $\Aut\cong G_{21}$ give three elliptic surfaces, up to complex conjugacy, with induced $\Zz_3$-action:  a $(2,3)$-elliptic surface whose $\Zz_3$-quotient is birational to  Ishida elliptic surface, another $(2,3)$-elliptic surface and a $(2,4)$-elliptic surface. See also \cite{K12} and \cite{K17} for further details.

 In this paper we find equations of a projective model of a conjugate pair of fake projective planes by studying the geometry of
the quotient of such surface by an order seven automorphism. 
The equations are given explicitly by 84 cubics in $\Pp^9$ with coefficients in the field $\mathbb{Q}[\sqrt{-7}]$. 
Their complex conjugate equations define the complex conjugate surface. 
 This pair has the most geometric symmetries among the 50 pairs, in the sense that its automorphism group is $G_{21}$ and its $\Zz_7$-quotient has minimal resolution a $(2,4)$-elliptic surface, which is not simply connected and whose
universal double cover has only one multiple (double) fibre, has the same Hodge numbers as K3 surfaces, but Kodaira dimension 1.
This pair is different from those of Mumford and Keum fake planes, and was disscussed in \cite{K11}.
 
It is an open problem to determine whether the bicanonical map of a given fake projective plane gives an embedding into $\Pp^9$. It has been confirmed affirmatively for several pairs of fake projective planes, including the one in this paper, by the vanishing result of \cite{K13}, \cite{K17}, \cite{CK} and the theorem of Reider \cite{reider} (see also \cite{GKMS}, \cite{dbdc}, where the authors use the term `Keum's fake projective planes' for all fake projective planes with $\Aut\cong G_{21}$). The equations in this paper also provide an explicit proof for the embeddability for the pair.
}

The paper is organized as follows. We describe our main result in Section 2 by presenting the equations of a subscheme in $\Cc\Pp^9$ and indicate the computer calculations that allow one to verify that this subscheme is a fake projective plane. In Section 3 we start the explanation of the process that led us to the equations. Specifically, we discuss the geometry of the minimal resolution of the quotient of a certain fake projective plane by $\Zz_7$ and its universal double cover $X$. In Section 4 we describe the breakthrough calculation that allowed us to identify the image of $X$ under a certain map to $\Cc\Pp^3$ as a specific singular sextic surface. In Sections 5 and 6 we describe additional features of the surface $X$ and explain how we found the field of rational functions of the fake projective plane. In Section 7 we finally explain how we obtained the 84 cubic equations of Section 2. We make a minor comment in Section 8.

%\smallskip
%{\bf Acknowledgements.} Borisov was partially supported by the NSF Grants DMS-1201466 and DMS-1601907. Keum was supported by the National Research Foundation of Korea (NRF 2019R1A2C3010487).

\section{Equations}
%2. Fake projective plane: equations. Argue that this gives a smooth surface.
In this section we write down $84$ explicit degree three equations in ten variables. We argue that they
cut out a fake projective plane with an automorphism group of order 21 with $H_1(Z, \Zz)=\Zz_2^4$. Here $\Zz_m:=\Zz/m\Zz$.

The 84 equations with complex conjugate coefficients cut out another fake projective plane that is complex conjugate to the former.
We identify this pair as the pair of fake projective planes which is $(a=7, p=2, \emptyset, D_32_7)$ in Cartwright-Steger classification \cite{CS2}, or as one of the three pairs in the class $(k=\Qq, \ell=\Qq(\sqrt{-7}), p=2, \Tt_1=\emptyset)$ \cite{CS}, \cite{PY}. { This pair does not belong to the class $(a=7, p=2, \{7\})$ which contains Mumford fake plane $(a=7, p=2, \{7\}, 7_{21})$ and Keum fake plane  $(a=7, p=2, \{7\}, D_32_7)$.
}

\medskip
Let $\Cc \Pp^9$ be a projective space with homogeneous coordinates denoted by $(U_0,U_1,\ldots,U_9)$.
Consider the non-abelian group $G_{21}$ of order $21$ which is a semi-direct product $\Zz_7$ and $\Zz_3$.
We define its action on $\Cc\Pp^9$ by its action on the homogeneous coordinates by
\begin{equation}\label{group_action}
\begin{array}{l}
g_7(U_0:U_1:U_2:U_3:U_4:U_5:U_6:U_7:U_8:U_9) :=
\\
 \hskip 10pt
(U_0: \xi^6 U_1:\xi^5 U_2:\xi^3 U_3:\xi U_4:\xi^2 U_5:\xi^4 U_6: \xi U_7:\xi^2 U_8:\xi^4U_9)\\
g_3(U_0:U_1:U_2:U_3:U_4:U_5:U_6:U_7:U_8:U_9) :=
\\ \hskip 10pt
(U_0:U_2:U_3:U_1:U_5:U_6:U_4:U_8:U_9:U_7) \\
\end{array}
\end{equation}
where $\xi=\exp({\frac {2\pi \ii}7})$ is the primitive seventh root of $1$.

\begin{table}[htp]
\caption{Equations of the fake projective plane 1-24}
\begin{center}
$$
\tiny{
\begin{array}{|cl|}
\hline
eq_1=&
U_1 U_2 U_3 + (1 - \ii \sqrt{7}) (U_3^2 U_4 + U_1^2 U_5 +  U_2^2 U_6) + (10 - 2 \ii \sqrt{7}) U_4 U_5 U_6
  \\[.8em]
     eq_2=&
     (-3 + \ii \sqrt{7}) U_0^3 + (7 + \ii \sqrt{7})(-2U_1 U_2 U_3 + U_7 U_8 U_9-8U_4 U_5 U_6)
 \\[.4em]&
 + 8 U_0( U_1 U_4 +
 U_2 U_5 + U_3 U_6)
  + (6 +
    2 \ii \sqrt{7}) U_0( U_1 U_7 +  U_2 U_8 +  U_3 U_9)

         \\[.8em]
eq_3=&(11 - \ii \sqrt{7}) U_0^3  + 128 U_4 U_5 U_6- (18 + 10 \ii \sqrt{7}) U_7 U_8 U_9  + 64( U_2 U_4^2 +
 U_3 U_5^2    +
 U_1 U_6^2)
  \\[.4em]&
+ (-14 - 6 \ii \sqrt{7}) U_0 (U_1 U_7+U_2 U_8+U_3 U_9) + 8(1 +  \ii \sqrt{7}) (U_1^2 U_8+ U_2^2 U_9+U_3^2 U_7-2 U_1 U_2 U_3 )

    \\[.8em]
 eq_4=&
- (1 + \ii \sqrt{7}) U_0 U_3 (4 U_6 + U_9) +8 (U_1 U_2 U_3 +U_1 U_6 U_9+U_5 U_7 U_9)+16(U_5 U_6  U_7 - U_1^2 U_5 -
  U_3 U_5^2)

\\[.4em]

eq_5=& g_3(eq_4)
\\[.4em]
eq_6=&g_3^2(eq_4)

\\[.8em]
eq_7=&
(12 + 4 \ii \sqrt{7}) U_1 U_2 U_3 + (4 + 4 \ii \sqrt{7})(U_3 U_5 U_8- U_0 U_2 U_5 + 4 U_4 U_5 U_6) + (3 - \ii \sqrt{7}) U_0 U_1 U_7

     \\[.4em]&
   + 8 (U_2 U_4 U_7  +
 U_6 U_7 U_8
      -  U_1^2 U_8 -2U_4 U_6 U_8 )+ (2 + 2 \ii \sqrt{7})( U_3 U_8^2- U_0 U_2 U_8)
 \\[.4em]

eq_8=& g_3(eq_7)
\\[.4em]
eq_9=&g_3^2(eq_7)

  \\[.8em]
eq_{10}=&
 (2 + 6 \ii \sqrt{7} )  U_1 U_2 U_3 + 4(-5 + \ii \sqrt{7}) U_5(U_1^2 +2U_4  U_6) -8
 U_0 (U_2 U_5+U_3U_6) + 8(-1 + \ii \sqrt{7}) U_3 U_5^2

 \\[.4em]&
 +
2 (3 - \ii \sqrt{7} ) U_0 U_1 U_7 -8 U_1^2 U_8 +
(-1 - \ii \sqrt{7}) U_8(U_0 U_2  + 4 U_4 U_9)+ 8(1 + \ii \sqrt{7}) U_3 U_5 U_8 -
 32 U_4 U_6 U_8
 \\[.4em]&
 + 2(1 - \ii \sqrt{7})  (2U_6 U_7 U_8+
 4U_5 U_7 U_9 + 4 U_5 U_6 U_7+  U_7 U_8 U_9  )+
2(3 + \ii \sqrt{7} ) U_3 U_8^2 - 16 U_4 U_5 U_9  +4 U_1 U_9^2

\\[.4em]
eq_{11}=& g_3(eq_{10})
\\[.4em]eq_{12}=&g_3^2(eq_{10})
\\[.8em]

eq_{13}=&-8 \ii \sqrt{7} U_1^2 U_3+ (-7+5 \ii \sqrt{7}) U_0 U_2 U_3+4(-7+\ii \sqrt{7}) U_0 U_6^2+4U_0^2 U_7+(8-8 \ii \sqrt{7}) U_1 U_4 U_7
\\[.4em]&
+4(-5-\ii \sqrt{7}) U_2 U_5 U_7+(8+8 \ii \sqrt{7}) U_3 U_6 U_7+ (-1-5 \ii \sqrt{7}) U_1 U_7^2-8 U_2 U_7 U_8+ (6+6 \ii \sqrt{7}) U_3 U_7 U_9

\\[.8em]

eq_{14}=& 8U_1^2 U_3+2 (3-\ii \sqrt{7}) U_0 U_1 U_5+16 U_3 U_4 U_6-16 U_5^2 U_6+2(1+\ii \sqrt{7}) U_2 U_5 U_7-8U_3 U_6 U_7
\\[.4em]&
+2 (-1-\ii \sqrt{7})  U_3^2 U_8+2 (-1+\ii \sqrt{7}) U_0 U_6 U_9+(-5-\ii \sqrt{7}) U_3 U_7 U_9

\\[.8em]

eq_{15}=&2 (-3-\ii \sqrt{7})  U_1^2 U_3+2(3-\ii \sqrt{7})  U_0 U_2 U_3+4(-1+\ii \sqrt{7}) U_0 U_1 U_5+4(-1-\ii \sqrt{7}) U_3^2 U_5
\\[.4em]&
+8 U_1 U_2 U_6+4(1+\ii \sqrt{7}) U_0 U_6^2-4U_0^2 U_7+(1+\ii \sqrt{7}) U_1 U_7^2+2 (-1+\ii \sqrt{7}) U_0 U_1 U_8+4U_3 U_7 U_9

\\[.8em]

eq_{16}=&(-3+\ii \sqrt{7}) U_2^3+(-3+\ii \sqrt{7}) U_1^2 U_3+4 U_0 U_2 U_3+(-2-2 \ii \sqrt{7}) U_0^2 U_4+8 U_1 U_4^2+8 U_0 U_1 U_5
\\[.4em]&
+(-5-\ii \sqrt{7}) U_1 U_2 U_6+(4+4 \ii \sqrt{7}) U_3 U_4 U_6+2 U_0 U_1 U_8+(3-\ii \sqrt{7}) U_2 U_7 U_8+(2+2 \ii \sqrt{7}) U_3 U_4 U_9

\\[.8em]

eq_{17}=&4(-1-\ii \sqrt{7}) U_2^3+ (5+\ii \sqrt{7}) U_0 U_2 U_3+4(3-\ii \sqrt{7}) U_3^2 U_5+16(1- \ii \sqrt{7}) U_2 U_4 U_5
\\[.4em]&
+4(-1-\ii \sqrt{7}) U_2 U_5 U_7-8 U_1 U_2 U_9+4(1+\ii \sqrt{7}) U_3 U_4 U_9-32 U_5^2 U_9-16 U_5 U_8 U_9

\\[.8em]

eq_{18}=&8U_1^2 U_3+ (-5-\ii \sqrt{7}) U_0 U_2 U_3+4 (1+\ii \sqrt{7}) U_3^2 U_5+4 (1+\ii \sqrt{7})  U_1 U_2 U_6
\\[.4em]&
+16(-1+ \ii \sqrt{7}) U_5^2 U_6+8U_2 U_5 U_7-16 U_3 U_6 U_7+8(-1+\ii \sqrt{7}) U_5 U_6 U_8-8U_3 U_7 U_9

\\[.8em]

eq_{19}=&(-5-\ii \sqrt{7}) U_0^2 U_4-8U_2 U_5 U_7+(-1-\ii \sqrt{7}) U_1 U_7^2+4 U_0 U_1 U_8-4 U_2 U_7 U_8+(-5+\ii \sqrt{7})U_1 U_2 U_9
\\[.4em]&
+2(1-\ii \sqrt{7}) U_3 U_4 U_9+2(1-\ii \sqrt{7}) U_0 U_6 U_9+4 U_3 U_7 U_9+2U_8^2 U_9+2U_0 U_9^2

\\[.8em]

eq_{20}=&4(1+\ii \sqrt{7}) U_1^2 U_3+2 (1-\ii \sqrt{7}) U_0 U_2 U_3-8 U_0^2 U_4+4(-3-\ii \sqrt{7}) U_1 U_4^2-8 \ii \sqrt{7} U_0 U_1 U_5
\\[.4em]&
+8(1- \ii \sqrt{7}) U_2 U_4 U_5+ (5-\ii \sqrt{7}) U_0 U_1 U_8+2 (-5+\ii \sqrt{7}) U_3^2 U_8+16 U_5 U_8 U_9+8 U_8^2 U_9

\\[.8em]

eq_{21}=&(1-\ii \sqrt{7}) U_1^2 U_3-4 U_0 U_1 U_5-8 U_3 U_4 U_6-8 U_0 U_6^2+4 U_1 U_4 U_7+(2-2 \ii \sqrt{7}) U_2 U_5 U_7
\\[.4em]&
+2 U_1 U_7^2-2 U_0 U_1 U_8+(1+\ii \sqrt{7}) U_3^2 U_8+(1-\ii \sqrt{7}) U_2 U_7 U_8+(-1+\ii \sqrt{7}) U_3 U_7 U_9

\\[.8em]

eq_{22}=&-8 U_1^2 U_3+16 U_2 U_4 U_5-8 U_1 U_2 U_6+4(1+\ii \sqrt{7}) U_0 U_6^2+ (1+\ii \sqrt{7}) U_0 U_1 U_8
\\[.4em]&

+8 U_2 U_4 U_8-8 U_5 U_6 U_8+4U_1 U_2 U_9-8 U_3 U_4 U_9+2(1+\ii \sqrt{7}) U_0 U_6 U_9

\\[.8em]

eq_{23}=&(-3+\ii \sqrt{7}) ( U_2^3+ U_1^2 U_3)+4(-1-\ii \sqrt{7}) U_1 U_4^2+(-1+3 \ii \sqrt{7}) U_1 U_2 U_6
+2 (-1-\ii \sqrt{7}) U_1 U_4 U_7
\\[.4em]&
+ (1+\ii \sqrt{7}) U_3^2 U_8+8 U_2 U_4 U_8+4(-1+\ii \sqrt{7}) U_5 U_6 U_8+4U_2 U_7 U_8+4U_1 U_2 U_9

\\[.8em]

eq_{24}=&2U_0 U_2 U_3+ (-1-\ii \sqrt{7}) U_0^2 U_4+2(1-\ii \sqrt{7}) U_0 U_1 U_5+2(1-\ii \sqrt{7}) U_1 U_2 U_6+2U_0 U_1 U_8
\\[.4em]&
-4 U_3^2 U_8-4 U_2 U_4 U_8+2(1-\ii \sqrt{7}) U_5 U_6 U_8+4 U_5 U_8 U_9+2U_8^2 U_9

\\
 \hline
\end{array}
}
$$

\end{center}
\label{eqs1-24}
\end{table}

\medskip

\begin{table}[htp]
\caption{Equations of the fake projective plane 25-84}
\begin{center}
$$
\tiny{
\begin{array}{|cl|}
\hline
eq_{25}=&(-1+3 \ii \sqrt{7}) U_0^2 U_1+(44-4 \ii \sqrt{7}) U_2^2 U_3+64 U_3 U_4 U_5+(36-12 \ii \sqrt{7}) U_1 U_3 U_6+(16+16 \ii \sqrt{7}) U_4^2 U_6
\\[.4em]&
+(-4-4 \ii \sqrt{7}) U_0 U_2 U_7-32 U_3 U_4 U_8+(4+4 \ii \sqrt{7}) U_0 U_6 U_8-16 U_3 U_7 U_8+(8-8 \ii \sqrt{7}) U_1 U_3 U_9+16 U_4 U_7 U_9

\\[.8em]

eq_{26}=&(-1+3 \ii \sqrt{7}) U_0^2 U_1+(-4-4 \ii \sqrt{7}) U_2^2 U_3+(40-8 \ii \sqrt{7}) U_1 U_2 U_5+(4-12 \ii \sqrt{7}) U_1 U_3 U_6+96 U_4^2 U_6
\\[.4em]&
+(-24-8 \ii \sqrt{7}) U_2 U_6^2+16 U_1^2 U_7+(-2+2 \ii \sqrt{7}) U_0 U_2 U_7+64 U_4 U_6 U_7+(20-4 \ii \sqrt{7}) U_1 U_2 U_8-8 U_0 U_6 U_8
\\[.4em]&
+16 U_4 U_7 U_9

\\[.8em]

eq_{27}=&(5+\ii \sqrt{7}) U_0^2 U_1+(-4-4 \ii \sqrt{7}) U_2^2 U_3+(16-16 \ii \sqrt{7}) U_3 U_4 U_5+(-20-4 \ii \sqrt{7}) U_1 U_3 U_6+32 U_4^2 U_6
\\[.4em]&
+32 U_0 U_5 U_6+8 U_0 U_6 U_8-16 U_1 U_3 U_9+16 U_0 U_5 U_9+8 U_0 U_8 U_9

\\[.8em]

eq_{28}=&8 U_2^2 U_3+(-3+\ii \sqrt{7}) U_0 U_3^2+(-4-4 \ii \sqrt{7}) U_1 U_2 U_5+(4+4 \ii \sqrt{7}) U_3 U_4 U_5+32 U_5^3+(4+4 \ii \sqrt{7}) U_3 U_5 U_7
\\[.4em]&
+16 U_5^2 U_8+(3-\ii \sqrt{7}) U_1 U_3 U_9+8 U_2 U_6 U_9

\\[.8em]

eq_{29}=&(-3+\ii \sqrt{7}) U_2^2 U_3+(5+\ii \sqrt{7}) U_0 U_2 U_4+8 U_1 U_2 U_5-8 U_2 U_6^2+2 U_0 U_2 U_7+(-1-\ii \sqrt{7}) U_1 U_2 U_8+8 U_5^2 U_8
\\[.4em]&
+(3-\ii \sqrt{7}) U_1 U_3 U_9+(4+4 \ii \sqrt{7}) U_4^2 U_9-8 U_2 U_6 U_9+(2+2 \ii \sqrt{7}) U_4 U_7 U_9-2 U_0 U_8 U_9+(-3+\ii \sqrt{7}) U_2 U_9^2

\\[.8em]

eq_{30}=&8 U_2^2 U_3+(4-4 \ii \sqrt{7}) U_1^2 U_4+(-12-4 \ii \sqrt{7}) U_1 U_2 U_5+(-4-12 \ii \sqrt{7}) U_4^2 U_6+(12+4 \ii \sqrt{7}) U_2 U_6^2
\\[.4em]&
+(2-2 \ii \sqrt{7}) U_1^2 U_7-8 U_1 U_2 U_8-16 U_3 U_4 U_8+(1+3 \ii \sqrt{7}) U_0 U_6 U_8+(-3-\ii \sqrt{7}) U_3 U_7 U_8+4 U_1 U_3 U_9
\\[.4em]&
+(6+2 \ii \sqrt{7}) U_2 U_6 U_9

\\[.8em]

eq_{31}=&(-4+4 \ii \sqrt{7}) U_1^2 U_4-4 U_1 U_2 U_5+(-4+4 \ii \sqrt{7}) U_3 U_4 U_5+16 U_5^3+(-8+8 \ii \sqrt{7}) U_4^2 U_6+(2+2 \ii \sqrt{7}) U_0 U_5 U_6
\\[.4em]&
-4 U_1^2 U_7+(2+2 \ii \sqrt{7}) U_6 U_7^2+8 U_3 U_4 U_8-4 U_0 U_6 U_8-4 U_5 U_8^2+(1+\ii \sqrt{7}) U_7^2 U_9

\\[.8em]

eq_{32}=&(-5-\ii \sqrt{7}) U_0^2 U_1+(-6+2 \ii \sqrt{7}) U_0 U_3^2+(-24+8 \ii \sqrt{7}) U_3 U_4 U_5+(20+4 \ii \sqrt{7}) U_1 U_3 U_6-32 U_4^2 U_6
\\[.4em]&
-32 U_0 U_5 U_6+32 U_2 U_6^2+(2+2 \ii \sqrt{7}) U_0 U_2 U_7+(4+4 \ii \sqrt{7}) U_1 U_2 U_8-8 U_0 U_6 U_8+(10+2 \ii \sqrt{7}) U_1 U_3 U_9
\\[.4em]&
+16 U_2 U_6 U_9

\\[.8em]

eq_{33}=&(7-5 \ii \sqrt{7}) U_0^2 U_1+(-56-24 \ii \sqrt{7}) U_1^2 U_4+32 \ii \sqrt{7} U_1 U_2 U_5+(28+4 \ii \sqrt{7}) U_1 U_3 U_6+(28+28 \ii \sqrt{7}) U_0 U_5 U_6
\\[.4em]&
+(-84-4 \ii \sqrt{7}) U_1^2 U_7+(7+7 \ii \sqrt{7}) U_0 U_2 U_7-56 U_3 U_5 U_7+56 U_6 U_7^2+24 \ii \sqrt{7} U_1 U_2 U_8+56 U_0 U_6 U_8
\\[.4em]&
+(14-18 \ii \sqrt{7}) U_1 U_3 U_9+28 U_7^2 U_9

\\[.8em]

eq_{34}=&(-5-\ii \sqrt{7}) U_0^2 U_1+48 U_1 U_2 U_5+(-16-16 \ii \sqrt{7}) U_3 U_4 U_5+32 U_4^2 U_6+(2+10 \ii \sqrt{7}) U_1^2 U_7
\\[.4em]&
+(-48+16 \ii \sqrt{7}) U_4 U_6 U_7
+(28-4 \ii \sqrt{7}) U_1 U_2 U_8+(-12-12 \ii \sqrt{7}) U_3 U_4 U_8+(-16-8 \ii \sqrt{7}) U_0 U_6 U_8
\\[.4em]&
+(-22+2 \ii \sqrt{7}) U_1 U_3 U_9
+(-8-8 \ii \sqrt{7}) U_2 U_6 U_9+(-8+8 \ii \sqrt{7}) U_4 U_7 U_9

\\[.8em]

eq_{35}=&(10+2 \ii \sqrt{7}) U_2^2 U_3+(-11+\ii \sqrt{7}) U_0 U_2 U_4-16 U_1 U_2 U_5+(20+4 \ii \sqrt{7}) U_3 U_4 U_5-16 U_2 U_6^2
\\[.4em]&
+(-1-\ii \sqrt{7}) U_0 U_2 U_7
+(-2-2 \ii \sqrt{7}) U_1 U_2 U_8-16 U_5^2 U_8+(-4-4 \ii \sqrt{7}) U_4^2 U_9+(3-\ii \sqrt{7}) U_2 U_9^2

\\[.8em]

eq_{36}=&(2+2 \ii \sqrt{7}) U_0 U_3^2+(-6+2 \ii \sqrt{7}) U_0 U_2 U_4+(4-4 \ii \sqrt{7}) U_1 U_3 U_6+32 U_4^2 U_6+(-12-4 \ii \sqrt{7}) U_2 U_6^2+2 U_0 U_2 U_7
\\[.4em]&
+16 U_4 U_6 U_7+(7-\ii \sqrt{7}) U_1 U_2 U_8-8 U_5^2 U_8+4 U_1 U_3 U_9+(4-4 \ii \sqrt{7}) U_0 U_5 U_9+4 U_4 U_7 U_9-2 \ii \sqrt{7} U_0 U_8 U_9

\\[.8em]

eq_{k}=&g_3(eq_{k-24}),~~k=37,\ldots,60

\\[.8em]
eq_{k}=&g_3^2(eq_{k-48}),~~k=61,\ldots,84

    \\
\hline
\end{array}
}
$$
\end{center}
\label{eqs25-84}
\end{table}

\begin{theorem}\label{main}
Eighty four cubic equations of Tables \ref{eqs1-24} and \ref{eqs25-84}
 give equations of a fake projective plane $Z$ in $\Cc\Pp^9$ embedded by its bicanonical linear system.
\end{theorem}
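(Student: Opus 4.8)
The plan is to prove the theorem by a chain of explicit computer-algebra computations with the homogeneous ideal $I=(eq_1,\dots,eq_{84})\subset S:=\Cc[U_0,\dots,U_9]$, showing that the subscheme $Z:=\Proj(S/I)\subset\Cc\Pp^9$ has exactly the invariants of a bicanonically embedded fake projective plane. Note first that the list is $g_3$-invariant and $g_7$-semi-invariant: by construction $eq_{37},\dots,eq_{60}$ and $eq_{61},\dots,eq_{84}$ are the $g_3$- and $g_3^2$-images of $eq_{13},\dots,eq_{36}$, while $\{eq_4,eq_5,eq_6\}$, $\{eq_7,eq_8,eq_9\}$, $\{eq_{10},eq_{11},eq_{12}\}$ are $g_3$-orbits, so the $G_{21}$-action \eqref{group_action} on $\Cc\Pp^9$ preserves $Z$; this cuts down the bookkeeping and, once $Z$ is seen to be smooth, exhibits a faithful embedding $G_{21}\hookrightarrow\Aut(Z)$. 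The first step is to compute a Gr\"obner basis of $I$ and read off the Hilbert polynomial $P_Z(t)=\chi(\Oo_Z(t))$; the expected answer is $P_Z(t)=18t^2-9t+1$, giving $\dim Z=2$, $\deg Z=36$ and $\chi(\Oo_Z)=P_Z(0)=1$. One also records that the $84$ cubics are linearly independent and that $\dim_\Cc(S/I)_3=136$, so that (after checking $I$ is saturated) they span the whole space $H^0(\mathcal{I}_Z(3))$ of cubics through $Z$, of dimension $\binom{12}{3}-136=84$.

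Next one verifies smoothness and irreducibility. For smoothness, apply the Jacobian criterion: the ideal generated by $I$ together with the $7\times7$ minors of the Jacobian matrix $(\partial\,eq_i/\partial U_j)$ should have empty zero locus in $\Cc\Pp^9$, so that $Z$ is smooth of pure dimension $2$. Connectedness --- hence irreducibility, since $Z$ is smooth --- follows from $h^0(\Oo_Z)=1$, read off from the minimal free $S$-resolution of $S/I$ (equivalently $H^1_{\mathfrak m}(S/I)=0$). From the same resolution one computes $H^1(\Oo_Z)$ and checks it vanishes, so $q(Z)=0$; since $\chi(\Oo_Z)=1=1-q+p_g$, this forces $p_g(Z)=h^2(\Oo_Z)=0$ as well.

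It remains to pin down the canonical class. From the resolution one computes the canonical module $\omega_{S/I}=\Ext^7_S(S/I,S(-10))$ ($Z$ has codimension $7$ in $\Cc\Pp^9$), whose sheafification is the canonical line bundle $\omega_Z$ since $Z$ is smooth, and one verifies the line-bundle identity $\omega_Z^{\otimes2}\cong\Oo_Z(1)$ --- concretely, that the multiplication $\omega_{S/I}\otimes_{S/I}\omega_{S/I}\to (S/I)(1)$ is an isomorphism in degree $0$, or that $H^0\!\big(Z,\omega_Z^{\otimes2}(-1)\big)$ and $H^0\!\big(Z,\omega_Z^{\otimes-2}(1)\big)$ are both nonzero. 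Then $2K_Z$ is linearly equivalent to a hyperplane section; in particular $K_Z$ is ample (a line bundle whose tensor square is very ample is ample) and $K_Z^2=\tfrac14(2K_Z)^2=\tfrac14\deg Z=9$. So $Z$ is a smooth projective surface with $p_g=q=0$, ample $K_Z$ and $K_Z^2=9$; it is of general type and minimal, Noether's formula gives $c_2(Z)=12\chi(\Oo_Z)-K_Z^2=3$, and with $b_1=b_3=2q=0$ this yields $b_2(Z)=c_2(Z)-2=1$. Hence $Z$ has the Betti numbers of $\Cc\Pp^2$ and, $K_Z$ being ample, is not $\Cc\Pp^2$: $Z$ is a fake projective plane. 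Finally $h^0(2K_Z)=h^0(\Oo_Z(1))=10$, the restriction $H^0(\Cc\Pp^9,\Oo(1))\to H^0(Z,\Oo_Z(1))$ is an isomorphism (as $Z$ lies on no hyperplane, $I$ having no linear part), and $\Oo_Z(1)$ is very ample, so the given embedding $Z\hookrightarrow\Cc\Pp^9$ is the embedding by the complete linear system $|2K_Z|$, which proves Theorem~\ref{main}. (The identification of $Z$ with the Cartwright--Steger pair $(a=7,p=2,\emptyset,D_32_7)$ and the equality $H_1(Z,\Zz)=\Zz_2^4$ are separate matters, handled via the geometric construction of the later sections.)

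The main obstacle is making all of this both feasible and rigorous rather than merely convincing. An exact Gr\"obner basis over $\Qq(\sqrt{-7})$ is heavy, so one reduces $I$ modulo a prime $\mathfrak p$ of $\Zz[\sqrt{-7}]$ of good reduction, works over the residue field, and transfers the conclusions back to characteristic zero by semicontinuity --- which forces one to check that reduction does not enlarge the ideal (the reduced Hilbert function must match the expected $18t^2-9t+1$), that smoothness in positive characteristic lifts, and that the characteristic-zero $\Ext$ and (local) cohomology modules are computed by the reduction of the free resolution. Among the individual steps, the canonical-bundle identification is the delicate one: certifying $\omega_Z^{\otimes2}\cong\Oo_Z(1)$ is an identification of line bundles, not a dimension count, and it is precisely this identity that upgrades ``$Z$ is a fake projective plane'' to ``$Z$ is bicanonically embedded.''
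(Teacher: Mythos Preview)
Your outline matches the paper's route: Hilbert polynomial, Jacobian smoothness modulo a prime, the free resolution for $h^i(\Oo_Z)$, the canonical module, and Noether's formula. One practical refinement: you cannot use \emph{all} $7\times 7$ minors of an $84\times 10$ Jacobian---the paper instead selects three specific minors (chosen not to vanish at the $g_7$-fixed points) and checks, by successive Hilbert-polynomial drops, that already these have no common zero on $Z$. This is what makes the smoothness verification feasible, but it is not a logical gap in your plan.

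The canonical-bundle step, however, has a real hole as written. Neither of your proposed certifications of $\omega_Z^{\otimes 2}\cong\Oo_Z(1)$ survives reduction modulo $p$: nonvanishing of an $H^0$ goes the \emph{wrong} way under semicontinuity (from $h^0>0$ mod $p$ you only get $h^0\leq h^0_{\,p}$ over $\Cc$), and there is no a~priori multiplication map $\omega_{S/I}\otimes\omega_{S/I}\to (S/I)(1)$ to test---such a map is exactly what you are trying to produce. The paper's workaround is the following. From the Hilbert polynomial one reads $D^2=36$ and $DK_Z=18$; computing $\chi(2K_Z)=10$ from the resolution gives $K_Z^2=9$, so $2K_Z$ and $D:=\Oo_Z(1)$ are numerically equivalent. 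One then computes $\Hom(\Oo(K_Z),\Oo(D))=0$ modulo $p$; this is a \emph{vanishing}, hence it does transfer to $\Cc$ by semicontinuity, and Serre duality yields $h^2(2K_Z-D)=h^0(D-K_Z)=0$. Riemann--Roch now forces $h^0(2K_Z-D)\geq\chi(2K_Z-D)=1$, and an effective numerically trivial divisor on a surface with $q=0$ is zero; hence $\Oo(2K_Z)\cong\Oo(D)$. This vanishing-plus-Riemann--Roch trick is the ingredient your last paragraph flags as ``delicate'' but does not supply.
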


\begin{proof}
{ Let $Z$ be the subscheme of $\Cc\Pp^9$ cut out by these eighty four equations.}
We used Magma to calculate the Hilbert series of $Z$ to give
$$
\dim H^0(Z,\Oo(k))
% = \frac 12 (6k-1)(6k-2)
=18k^2-9k+1
$$
for all $k\geq 0$.

\medskip
We then used reduction modulo $263$ with $\ii\sqrt 7 = 16\mod 263$ { (which was chosen just because it is a decent size prime with a clear root of $-7$). }  We calculated (by Macaulay2) the projective resolution
of $\Oo_Z$ as
$$
\begin{array}{c}
0\to
\Oo(-9)^{\oplus 28}\to
\Oo(-8)^{\oplus 189}\to
\Oo(-7)^{\oplus 540}\to\Oo(-6)^{\oplus 840}\to
\\
\to\Oo(-5)^{\oplus 756}\to\Oo(-4)^{\oplus 378}\to\Oo(-3)^{\oplus 84}\to\Oo\to\Oo_Z \to 0.
\end{array}
$$
By semicontinuity, the resolution is of the same shape over $\Cc$. Since all of the sheaves $\Oo(-k),  ~k=3,\ldots,9$ are acyclic, we see that { for all $i\ge 0$ $$h^i(Z,\Oo_Z)=h^i(\Cc\Pp^9,\Oo).$$ That is, $h^1(Z,\Oo_Z)=h^2(Z,\Oo_Z)=0$ and $h^0(Z,\Oo_Z)=1$, which implies that the scheme $Z$ is connected. Since the Hilbert polynomial has degree 2, its irreducible components have dimension at most 2.}

\medskip
We also verified that $Z$ is smooth. It is a somewhat delicate calculation. In theory, one can take the $7\times 7$ minors of the $84\times 10$ matrix of partial derivatives of the  equations
and verify that, together with the equations themselves, they generate the ideal which coincides with $\Cc[U_0,\ldots,U_9]$ for large degrees. In practice, such direct calculation is impossible,
since the number of minors is too large.
{ Instead, we pick three $7\times 7$ minors of the Jacobian matrix and show that they have no common zeros on $Z$ by a Hilbert polynomial calculation.
The minors were chosen so that they do not vanish at the fixed points of the automorphism $g_7$, namely at the three points}
%Instead, we took all minors of Jacobian matrices of three subsets of size $7$ of the set of equations which were picked by the condition that the equations are
%linearly independent to first order at the fixed points
$$
(U_0,\ldots,U_7,U_8,U_9)\in \{(0,\ldots,0,0,1),(0,\ldots,0,1,0),(0,\ldots,1,0,0)\}.
$$
{ The subsets of  equations and variables that define the minors are given in Table \ref{3minors}.}
%These subsets of indices are given in Table \ref{which.minors}.
This calculation was performed in Magma software package
modulo { $263$} with $\ii\sqrt{7}=16$.
{ The Hilbert polynomial of the quotient drops from $18k^2-9k+1$ to $504k-3654$, then to $7056$ and finally to $0$ as one adds the three minors to the ideal.}
%The Hilbert polynomial of the quotient drops from $18n^2-9n+1$ to $378n-2079$, then to $3969$ and finally to $0$ as one adds the minors to the ideal.
If the equations generate the ring modulo { $263$,} then they also generate it with exact coefficients. { This calculation means that all geometric points of $Z$ have tangent space of dimension at most two, which together with $h^{0}(\Oo_Z)=1$ implies that $Z$ is a smooth surface.}

\begin{table}[htp]
\caption{Three $7\times 7$ minors used to verify smoothness}
%\caption{Indices of seven equations $eq_i$ whose $7\times 7$ minors of the Jacobian matrix are used to verify smoothness}
$$\begin{array}{|c|}
\hline
 \{ 8,19,29,43,55,61,79 \} { ; \{U_0,U_1,U_2,U_3,U_5,U_6, U_7\}} \\
\{ 7,19,31,37,55,67,77 \} { ; \{U_0,U_1,U_2,U_3,U_4,U_5, U_9\}}\\
\{ 9,13,31,43,53,67,79\} { ; \{U_0,U_1,U_2,U_3,U_4,U_8, U_9\}}\\
\hline
\end{array}
$$
\label{3minors}
\end{table}

\medskip
Thus we have a smooth surface $Z$ and a very ample divisor class $D=\Oo_Z(1)$ on it. The Hilbert polynomial together with the Riemann-Roch implies that
$$D^2=36,\,\,\, D K_Z=18,\,\,\, \chi(Z,\Oo_Z)=1.$$ Note that this shows that $Z$ is not isomorphic to $\Cc\Pp^2$.
{ We also know that $h^{0,1}(Z)=h^{0,2}(Z)=0$, so it remains to prove that $h^{1,1}(Z)=1$.}

\medskip
To figure out this last Hodge number we used Macaulay to calculate $\chi(Z,\Oo(2K_Z))=10$ (again working modulo $263$). { For this calculation, we used the resolution to compute the canonical bundle $K_Z$ as in
Hartshorne's book, as Ext from the canonical bundle of the ambient space to $\Oo_Z$, then
tensored it with self to get $2K_Z$, then calculated the Hilbert polynomial of
the corresponding graded module to get $\chi(Z, 2K_Z)$. Now by Riemann-Roch
$\chi(Z, 2K_Z) = K_Z^2 + \chi(Z, \Oo_Z)$ and we know that $\chi(Z,\Oo_Z)=1$, thus $K_Z^2=9$.
}
Now Noether's formula finishes the proof that $Z$ is a fake projective plane.

\medskip
We see that $2K$ is numerically equivalent to $D$.
We calculated
$$\Hom(\Oo(K),\Oo(D))=0$$
by working modulo $263$ and semi-continuity.
This implies
$$h^0(Z,\Oo(D-K))=0=h^2(Z,\Oo(2K-D)).$$
This implies that
$h^0(Z,\Oo(2K-D))\geq 1$, so $\Oo(2K)\simeq \Oo(D)$. So the fake projective plane $Z$ is embedded via a bicanonical embedding.
\end{proof}

\begin{remark}
Consider the closed subscheme $C$ of $Z$ cut out by $U_0=0$ and the following 18 quadrics, which fall into 6 orbits under the
%$C_3$-action
{ $\Zz_3$-action}
$$
\la U_0,\,\,\, U_1^2 - U_6 U_7 + \frac 18(-5 - \ii\sqrt 7)U_7U_9,
%U_3 U_5 + \frac 18 (1 + \ii\sqrt 7) U_7 U_9,
\,\,\, U_4 U_6 - \frac 18 (1 + \ii\sqrt 7)U_3 U_8,$$
$$U_2 U_4 + \frac 18 (1 + \ii\sqrt 7) U_8 U_9,\,\,\,
%$$(-1 + \ii\sqrt 7) U_5 U_6 +U_2 U_7, U_3^2 - U_5 U_9 + \frac 18 (-5-\ii\sqrt 7) U_8 U_9,$$
U_1U_4 + U_3U_6 + \frac 18 (5 + \ii\sqrt 7)U_3U_9,
$$
$$U_1U_2 + U_5 U_8,\,\,\,
U_4^2 +  \frac 18 (1 + \ii\sqrt 7)U_2U_9 + \frac 18 (5 + \ii\sqrt 7)U_4U_7,$$
$$12\,\,{\rm images\,\, of\,\,the\,\, six\,\,quadrics\,\, under}\,\,
%C_3
{ \Zz_3}
 \ra.
$$

\medskip\noindent
By calculating its Hilbert polynomial, we see that it is one-dimensional, with the total degree of one-dimensional components equal to $18$.
This means that $C$ is a (manifestly $\Zz_7$-invariant) curve on $Z$. Moreover, { by computing Hilbert polynomials of $\la U_0 \ra+ I^2$ and $\la U_0\ra$,} we see that the square of this ideal $I$ lies in $\la U_0\ra$.
Therefore, we see that the bicanonical divisor $2K_Z$ is linearly equivalent to $2C$. By Lemma \ref{24} this implies that $Z/\Zz_7$ has minimal model
which is a $(2,4)$-elliptic surface. It also identifies $Z$ as the pair of fake projective planes which is $(a=7, p=2, \emptyset, D_32_7)$ in Cartwright-Steger classification \cite{CS2}, or as one of the three pairs in the class $(k=\Qq, \ell=\Qq(\sqrt{-7}), p=2, \Tt_1=\emptyset)$ \cite{CS}, \cite{PY}.
\end{remark}

\begin{lemma}\label{24} Let $W$ be a fake projective plane with $\Aut(W)=\Zz_7:\Zz_3$. Then the following are equivalent.
\begin{enumerate}
\item $W$ contains an effective $\Zz_7$-invariant curve $C$ with $C^2=9$.
\item The action of $\Zz_7$ on $W$ fixes a non-trivial element in $H_1(W, \Zz)$.
\item $H_1(W, \Zz)=\Zz_2^4$.
\item { The minimal resolution of $W/\Zz_7$} is a $(2,4)$-elliptic surface.
\end{enumerate}
\end{lemma}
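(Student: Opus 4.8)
The plan is to establish the cycle of equivalences $(3)\Rightarrow(2)\Rightarrow(1)\Rightarrow(4)\Rightarrow(3)$, relying on the known classification data for the three pairs of fake projective planes with $\Aut(W)=\Zz_7\!:\!\Zz_3$ and on the geometry of the $\Zz_7$-quotient. First I would recall from Cartwright--Steger \cite{CS2} and \cite{K08} that for such $W$ one has $H_1(W,\Zz)\in\{\Zz_2^4,\ \Zz_2\times\Zz_4,\ \Zz_{14},\dots\}$ — in fact among the three $G_{21}$-pairs exactly one has $H_1=\Zz_2^4$ — and that the induced $\Zz_7$-action on $H_1(W,\Zz)$ is what distinguishes them. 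So $(3)\Rightarrow(2)$ is essentially a table lookup: when $H_1(W,\Zz)=\Zz_2^4$, the order-$7$ automorphism acts on a four-dimensional $\mathbb{F}_2$-vector space, and since $7\nmid |\GL_4(\mathbb{F}_2)|$ would be false (indeed $|\GL_4(\mathbb{F}_2)|=20160=2^6\cdot3^2\cdot5\cdot7$, so $7$ does divide it), one must check that the specific action has a nonzero fixed vector; this is recorded in \cite{K08}, where the quotient is computed to have $H_1=\Zz_2^2$ (or similar), forcing a nontrivial invariant.

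For $(2)\Rightarrow(1)$: a nonzero $\Zz_7$-fixed class in $H_1(W,\Zz)$ gives, by Poincaré duality and the universal coefficient theorem, a $\Zz_7$-invariant torsion class $\tau$ in $H^2(W,\Zz)$, hence a $\Zz_7$-linearized torsion line bundle $L$ with $L^{\otimes m}\cong\Oo_W$ for the order $m$ of $\tau$. Twisting the canonical bundle, $K_W\otimes L$ is again ample with $(K_W\otimes L)^2=K_W^2=9$ and $\chi(W,K_W\otimes L)$ computable by Riemann--Roch and the vanishing $h^i(W,\Oo_W\otimes L^{-1})=0$ for $i=0,1$ (which holds because $L^{-1}$ is a nontrivial torsion bundle on a surface with $q=p_g=0$, so $h^0=0$ and then $h^1=h^0(K_W\otimes L)-\chi$; one uses the standard vanishing for fake projective planes, e.g. as in \cite{K13}). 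This produces a nonzero section of $K_W\otimes L$ whose zero divisor $C$ is effective, $\Zz_7$-invariant (after averaging / by uniqueness of the linear system which is a single point since $h^0(K_W\otimes L)=1$), and satisfies $C^2=(K_W\otimes L)^2=9$.

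For $(1)\Rightarrow(4)$: given the $\Zz_7$-invariant curve $C$ with $C^2=9$, descend to $\bar W := W/\Zz_7$ and let $\pi\colon V\to\bar W$ be the minimal resolution. The quotient $\bar W$ has three cyclic quotient singularities (the three $g_7$-fixed points, each of type $\frac17(1,a)$), and $V$ is known by \cite{K08} to be an elliptic surface; the task is to pin down the multiplicities of its multiple fibres. The image $\bar C$ of $C$ descends to a curve with $\bar C^2 = C^2/7 = 9/7$ before resolution, and after resolving and using $C = q^*\bar C$ with $q\colon W\to\bar W$ the degree-$7$ quotient, the numerical data $K_W = q^*K_{\bar W}$ (the action being free away from the three fixed points, with the standard correction), $K_W^2=9$, $\chi(\Oo_W)=1$ force $\chi(\Oo_V)=1$, $K_V^2=0$, $p_g(V)=q(V)=0$; so $V$ is a properly elliptic surface with $b_1=0$, and by Dolgachev's formula for the canonical bundle of an elliptic surface, $K_V = (p_g-\chi)F + \sum(m_i-1)F_i = -F + \sum(m_i-1)F_i$ being effective forces the multiple-fibre multiplicities $(m_i)$ to satisfy $\sum\frac{m_i-1}{m_i}=1+\frac{p_g - \chi + 1}{1}$-type relations; the only solution compatible with the $\pi_1$ and with the torsion $\Zz_2^4$ that appears is $(m_1,m_2)=(2,4)$. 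The main obstacle is exactly this last step — correctly accounting for the exceptional divisors of the three $\frac17$-singularities and matching the self-intersection $C^2=9$ and the fundamental-group / torsion constraints to exclude the a priori competing cases $(2,3)$, $(3,3)$, etc. — and here I would lean on the explicit quotient computations in \cite{K08}, \cite{K12}: the curve $C$ with $C^2=9$ maps to a multisection meeting the two multiple fibres in a way that is only numerically consistent with $(2,4)$, and conversely $(2,4)$ forces $H_1=\Zz_2^4$ by the known relation between the torsion of $W$ and the multiplicities, closing the loop $(4)\Rightarrow(3)$.
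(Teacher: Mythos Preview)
Your cycle $(3)\Rightarrow(2)\Rightarrow(1)\Rightarrow(4)\Rightarrow(3)$ is more work than necessary, and it has a genuine circularity. In the step $(1)\Rightarrow(4)$ you write that ``the only solution compatible with the $\pi_1$ and with the torsion $\Zz_2^4$ that appears is $(m_1,m_2)=(2,4)$'' --- but $H_1(W,\Zz)=\Zz_2^4$ is statement (3), which in your cycle is only reached \emph{after} (4). So as written the argument assumes what it is trying to prove. You acknowledge this is the ``main obstacle'' and fall back on \cite{K08}, \cite{K12}, but the direct numerical argument you sketch (matching $C^2=9$ against possible multiple-fibre types) does not by itself separate $(2,4)$ from $(2,3)$ without the torsion input.

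The paper's route is shorter and avoids this trap: it proves $(1)\Longleftrightarrow(2)$ directly, then invokes \cite{CK} for $(2)\Longleftrightarrow(3)$ and \cite{K08} for $(3)\Longleftrightarrow(4)$. The point for $(1)\Leftrightarrow(2)$ is that any effective curve $C$ with $C^2=9$ is numerically equivalent to $K_W$, hence lies in $|K_W+t|$ for some torsion class $t$; since $p_g(W)=0$ one has $t\neq 0$; and for a cyclic group a complete linear system contains an invariant member if and only if the system itself is invariant, which here means exactly that $t$ is $\Zz_7$-fixed. Your $(2)\Rightarrow(1)$ is essentially the forward half of this, though you over-argue the vanishing: $\chi(K_W+t)=1$ and $h^2(K_W+t)=h^0(-t)=0$ already give $h^0\geq 1$, and then any cyclic action on the projective space $|K_W+t|$ has a fixed point.

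One small missed opportunity: in your $(3)\Rightarrow(2)$ you correctly note that $7\mid|\GL_4(\mathbb{F}_2)|$, so mere order considerations do not force a fixed vector. But in fact they do: over $\mathbb{F}_2$ one has $x^7-1=(x-1)(x^3+x+1)(x^3+x^2+1)$, so the characteristic polynomial of any order-$7$ element of $\GL_4(\mathbb{F}_2)$ must be $(x-1)$ times an irreducible cubic, and the eigenvalue $1$ gives a nonzero fixed vector. This makes $(3)\Rightarrow(2)$ elementary, without appeal to the tables.
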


\begin{proof} On a fake projective plane an effective curve $C$ with $C^2=9$ is a member of the linear system $|K_W+t|$ for some non-zero $t\in \Tor\Pic(W)\cong H_1(W, \Zz)$. For a subgroup $G$ of $\Aut(W)$ the linear system $|mK_W+t|$ is $G$-invariant if and only if so is $t$.  For a { cyclic} subgroup $G$ of $\Aut(W)$ a complete linear system is $G$-invariant if and only if a member of the system is $G$-invariant. %(commuting matrices in $\PGL(n)$ are simultaneously triangulable).
This proves the equivalence of (1) and (2). These two are equivalent to (3) by Corollary 3.4 of \cite{CK}, then to (4) by the classification of \cite{K08} on the possible geometric structures of quotients of fake projective planes.
\end{proof}

Furthermore, if $H_1(W, \Zz)=\Zz_2^4$, { a  $\Zz_7$-invariant non-trivial 2-torsion is unique (\cite{CK}, Corollary 3.4), hence is $\Aut(W)$-invariant.}
%If two $\Zz_7$-invariant curves $C_i$ with $C_i^2=9$ exist, then both correspond to the same non-trivial element in $H_1(W, \Zz)$ which is the unique $\Zz_7$-invariant 2-torsion.

{ 
\begin{remark}  It is always true that a surface in $\Pp^9$ can be defined scheme-theoretically
by at most 10 equations. 
Suitably chosen 15 among the 84 equations are enough to define $Z$, as pointed out by a referee.
 However, it is important for constructing the resolution
to cut it out ideal-theoretically. Moreover, Macaulay 2 works smoothly with the saturated ideal generated by the 84 cubics.
\end{remark}
}

\section{Explanation begins: the double cover of the resolution of the $\Zz_7$-quotient of a fake projective plane.}
%3. Explanation begins: double cover of resolution of quotient of fake projective plane.
%   Picard group of the double cover. Divisor 3F+S and its freeness.
The equations of the previous section appear quite mysterious, and we will spend the rest of the paper explaining
their origin. Our general construction can be roughly summarized in the following commutative diagram of morphisms, with notations that will be used throughout the paper
 \medskip

 $\Pp_{\fake}^2$: a fake projective plane with $\Aut=\Zz_7:\Zz_3$ such that
the minimal resolution $Y$ of $\Pp_{\fake}^2/\Zz_7$ is a $(2,4)$-elliptic surface;

 \medskip
 $X$: the universal double cover of $Y$.

\begin{equation}\label{eq_diagram}
\begin{array}{rcccccccc}
{\mathcal B}^2 & && \widehat{\Pp_{\fake}^2} && & \hskip -5pt X &\stackrel \pi{\longrightarrow} & \Pp^3\\
                        &\searrow &&\hskip -40pt  \swarrow            &  \hskip -30pt \searrow     &\swarrow&    &\hskip -20pt     \searrow     &      \\
                        & &   \Pp_{\fake}^2             &                   & \hskip 10ptY&    &                     &               &\hskip -35pt \Pp^1    \\
                         & &  &  \hskip -60pt \searrow        &  \hskip -30pt \swarrow    &\searrow&    &\hskip -20pt      \swarrow &      \\
                        & & &  \Pp_{\fake}^2 /\Zz_7                               &  &              &\Pp^1&&    \\

\end{array}
\end{equation}

\medskip
In this section we describe the known results of \cite{K08}, \cite{K11}, \cite{K17},  on the quotients of fake projective planes
with automorphism group of order $21$ by the subgroup of order $7$. Specifically, we describe the aspects of the geometry of $Y$ and $X$  in \eqref{eq_diagram} that will be later
used to find the equation of $\pi(X)\subset \Pp^3$.

\medskip
Let $\Pp_{\fake}^2$ be a fake projective plane with non-commutative automorphism group $G_{21}\cong \Zz_7:\Zz_3$. Consider the quotient
$\Pp_{\fake}^2/\Zz_7$ of $\Pp_{\fake}^2$ by the (normal) Sylow 7-subgroup of $G_{21}$. It is a singular surface of Kodaira dimension one with three quotient singular points of type $\frac 17(1,3)$ and inherits
an order three automorphism which permutes these singular points.
The minimal resolution $Y$ of $\Pp_{\fake}^2/\Zz_7$ is
an elliptic surface over $\Cc\Pp^1$ with $h^{2,0}(Y)=h^{1,0}(Y)=0$, and two multiple fibers with multiplicities $(2,3)$ or $(2,4)$, as shown  in \cite{K08}.
The Hodge numbers of $Y$ are given by
$$
h^{0,0}(Y) = h^{2,2}(Y)=1,~h^{1,1}(Y)=10,~h^{p,q}(Y)=0 \mbox{ otherwise.}
$$
Throughout the rest of the paper we will consider the fake projective planes which lead to elliptic surfaces $Y$ with multiple fibers
of multiplicities $(2,4)$. By the classification of \cite{PY} and \cite{CS} there is exactly one such conjugate pair
of fake projective planes. (The other two conjugate pairs with $\Aut\cong G_{21}$ lead to $(2,3)$-elliptic surfaces.)
Let us denote by $4F_Y$ the multiplicity four fiber and by $2F_{2,Y}$ the multiplicity two fiber. We summarize the results
of \cite{K08}, \cite{K11} and \cite{K17}.

\medskip
The preimages of $\frac 17(1,3)$ singular points in $Y$ are three  pairwise disjoint chains of spheres
$$
S - B - C,~~~~S' - B' - C', ~~~~ S'' - B'' - C''
$$
with $S^2=(S')^2=(S'')^2=-3$ and the squares of the rest equal to $-2$. The canonical class $K_Y$ is numerically equivalent to $F_Y$, and the elliptic fibration $Y\to \Pp^1$ is given by the linear system
$$|4F_Y|=|2F_{2,Y}|=|4K_Y|,$$ i.e., a general fiber is linearly equivalent to $4F_Y$. The curves $S$, $S'$ and $S''$ are
$4$-sections of the fibration, i.e. $$F_Y S = F_YS'=F_YS''=1.$$ The curves $B$, $C$ and their translates are part of an $I_9$-fiber of $Y\to \Pp^1$
and the order 3 automorphism group acts fiberwise. There are three additional $I_1$-fibers, some of which may be the multiple fibers.

\medskip
The structure of the $I_9$-fiber will be very important in what follows.
We denote its nine components by
$$
A - B - C - A' - B' -C' - A'' - B'' - C'' -A.
$$
The curve $S$ intersects $B$ transversely and does not intersect $C,B',C',B'',C''$. The $I_9$-fiber is not a multiple fiber, i.e., is equivalent to $4F_Y$ by Theorem 2.3 of \cite{K17}, thus we see that $S$ must
intersect $A$, $A'$ and $A''$ in three points total. These intersection numbers determine the intersection numbers of $S'$ and $S''$ with $A,A',A''$
because of the order three automorphism.

\medskip
It is easy to see that the classes of the curves
$$A,B,C,A',B',C',A'',B'',C'',S,S',S''$$
generate a sublattice of rank $\ge 10$ inside the Picard lattice of $Y$, the N\'eron-Severi group of $Y$ modulo torsion. (The first 9 curves already generate a rank 9 sublattice.) By Poincar\'e duality the Picard lattice of $Y$ is unimodular of signature $(1,9)$, thus the sublattice must have rank 10 and discriminant a square integer,
which puts strong restrictions on the intersection numbers. It was observed in \cite{K11} that the only possibilities are
\begin{equation}\label{cases}
\begin{array}{l}
\mbox{Case 1: } S A = 1,~S A' = 0,~SA''=2;\\
\mbox{Case 2: } S A = 0,~S  A' = 2,~S A''=1.
\end{array}
\end{equation}
The fundamental group of a $(2,4)$-elliptic surface is of order 2 \cite{D}, thus the surface $Y$ has an unramified double cover $X$ which is part of the diagram \eqref{eq_diagram}. It comes from a double cover $\Pp^1\to \Pp^1$ of the base of the fibration
ramified over the images of $F_Y$ and $F_{2,Y}$.
The preimage of the canonical divisor class  $K_Y$ is the canonical class $K_X$ of $X$ and is numerically equivalent to the preimage $F$ of $F_Y$. Since on a simply connected surface a numerical equivalence is a linear equivalence, $K_X$ is linearly equivalent to $F$.
We will denote by $F_{2}$ the preimage of $F_{2,Y}$. Then the elliptic fibration $X\to \Pp^1$ is given by the linear system
$$|2F|=|F_{2}|=|2K_X|,$$ and has only one multiple fiber $2F$ (with multiplicity 2). In particular $X$ has Kodaira dimension 1. Since $X$ is simply connected, $h^{1,0}(X)=0$.
Since $\chi(X,\Oo_X)=2\chi(Y,\Oo_Y)=2$ we get $h^0(X,K_X)=1$.  This implies that
$$
h^{0,0}(X) = h^{2,2}(X)=h^{2,0}(X) = h^{0,2}(X)=1,\,\,~h^{1,1}(X)=20,$$
$$
h^{p,q}(X)=0 \mbox{ otherwise,}
$$
i.e., $X$ has the Hodge numbers of K3 surfaces. Its Jacobian fibration is an elliptic surface over $\Pp^1$ with a section, with no multiple fibre, and with singular fibers of the same type as those of $X$ (this is true for Jacobian fibration of any genus one fibration, cf. \cite{CD}), thus has trivial canonical class and the sum of Euler numbers of singular fibres 24, hence is a K3 surface.

The preimage under $X\to Y$ of the curve $S$ is $S_1+S_2$ where $S_i$ are disjoint smooth rational curves with $S_i^2=-2$. Each of the curves $S_i$ is a $2$-section of
$X\to \Cc\Pp^1$. Similarly, we define $S_1'$, $S_2'$, $S_1''$ and $S_2''$. Preimage of the $I_9$-fiber $A-B-\ldots-C''-A$ is two disjoint $I_9$-fibers
$$A_1-B_1-\ldots -C_1''-A_1,\,\,\,\, A_2-B_2-\ldots-C_2''-A_2.$$ We arrange the indexing so that we get six $(-3)-(-2)-(-2)$ chains of $\Cc\Pp^1$ curves
$$
S_i-B_i-C_i,~S_i'-B_i'-C_i',~S_i''-B_i''-C_i'',~~i\in \{1,2\}.
$$

As before, we would like to determine the possible intersection numbers of the $24$ curves
$$S_1,\ldots,S_2'',A_1,\ldots,C_2''$$
with each other. These intersections are uniquely determined by the nonnegative integers
$S_1 A_1, S_1 A_1', S_1 A_1'', S_2 A_1, S_2 A_1',S_2 A_1''$ which are subject to
$S_i(A_1+A_2)=S A$, $S_i(A_1'+A_2')=S A'$, $S_i(A_1''+A_2'')=S A''$ from \eqref{cases}.
The resulting intersection matrix has to have rank at most $20$, because the rank of the Picard group does not exceed $h^{1,1}(X)$.

A simple computer calculation shows that only Case 2 of \eqref{cases} is possible and, moreover, there holds
\begin{equation}\label{intersections_on_X}
S_{1}A_1=S_2 A_1=0,\,\,S_1A_1'=S_2A_1' =1,\,\,S_1 A_1'' =0,\,\,S_2 A_1'' =1,
\end{equation}
i.e., $S_{1}$ intersects at one point exactly $B_1$ and $A_1'$ of the first $I_9$-fiber, and exactly $A_2'$ and $A_2''$ of the second. ($S_{2}$ intersects exactly $B_2$ and $A_2'$ of the second $I_9$-fiber, and exactly $A_1'$ and $A_1''$ of the first.)
This gives a rank $19$ intersection matrix. This rank is not the maximum possible $h^{1,1}(X)=20$, thus leaves a possibility that $F$ or $F_{2}$ is of type $I_2$, i.e., $F_Y$ or $F_{2,Y}$ on $Y$ is of type $I_1$.

The following is crucial in our approach.

\begin{proposition}\label{crucial}
Let  $D$ be the divisor $3F+S_1+S_2$ on $X$ which is the pullback of the divisor $3F_Y+S$ from $Y$.
Then  $D^2=6$, $h^0(D)=4$ and the linear system $|D|$ is base point free. It gives a birational map $\pi: X\to \Cc\Pp^3$ whose image is a sextic surface. Moreover,
\begin{enumerate}
\item $F$ is an elliptic curve and maps $2:1$ onto a line;
\item each $I_9$-fiber maps to a union of  a conic and two lines;
 \item a general fiber maps birationally onto a plane quartic curve with nodes at the points $\pi(S_1)$ and $\pi(S_2)$;
\item $F_2$, if irreducible,  maps $2:1$ onto a conic.
\end{enumerate}
\end{proposition}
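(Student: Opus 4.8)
\emph{Numerical invariants and $h^{0}(D)$.} The first step is to read everything off the étale double cover: since $D=f^{*}(3F_Y+S)$ for the degree-two étale map $f\colon X\to Y$, intersection numbers double, so from $K_Y\equiv F_Y$, $F_Y^2=0$, $F_Y\cdot S=1$, $S^2=-3$ one gets $D^2=2(3F_Y+S)^2=6$ and $D\cdot K_X=2(3F_Y+S)\cdot F_Y=2$, hence by Riemann--Roch $\chi(\Oo_X(D))=\chi(\Oo_X)+\tfrac12(D^2-D\cdot K_X)=4$. Now $h^2(D)=h^0(K_X-D)=0$ because $(K_X-D)\cdot F=-D\cdot F=-2<0$ while $F$ is nef, and $h^1(D)=0$ by Kawamata--Viehweg vanishing applied after a Zariski decomposition: writing $D=K_X+(2F+S_1+S_2)$ (valid since $K_X\sim F$ on the simply connected $X$), the $\Q$-divisor $2F+S_1+S_2$ has nef and big positive part $P=2F+\tfrac23(S_1+S_2)$, with $P^2=\tfrac83$ and fractional part supported on the disjoint smooth curves $S_1\cup S_2$, so $H^i(X,K_X+\lceil P\rceil)=H^i(X,D)=0$ for $i>0$. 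Thus $h^0(D)=4$. The same recipe (and, where convenient, the observation that a class has a unique effective representative once restriction to a negative curve forces that curve to appear) evaluates the auxiliary values $h^0(2F+S_1+S_2)=2$, $h^0(F+S_1+S_2)=1$, $h^0(3F+S_i)=3$ that are needed below.

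\emph{Base point freeness.} First, $D$ is nef: for an irreducible curve $C\neq S_i$ one has $D\cdot C=3\,F\cdot C+(S_1+S_2)\cdot C\ge0$, while $D\cdot S_i=3\,F\cdot S_i+S_i^2=0$; together with $D^2=6$ this makes $D$ nef and big. Because $D\sim 2F+(F+S_1+S_2)$ and $F+S_1+S_2$ is the unique effective divisor in its class (any effective representative must contain $S_1$ and $S_2$, as $(F+S_1+S_2)|_{S_i}$ has negative degree, and then equals it since $|K_X|=\{F\}$), the base point free pencil $|2F|$ is, up to the fixed summand $F+S_1+S_2$, a subsystem of $|D|$; hence $\mathrm{Bs}\,|D|\subseteq F\cup S_1\cup S_2$. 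On the elliptic curve $F$, $D|_F$ has degree $D\cdot F=2$ and so is base point free, and $h^0(D-F)=h^0(2F+S_1+S_2)=2=h^0(D)-h^0(F,D|_F)$ shows $H^0(X,D)\to H^0(F,D|_F)$ is surjective, so $\mathrm{Bs}\,|D|\cap F=\emptyset$. On $S_i\cong\Pp^1$, $D\cdot S_i=0$ gives $D|_{S_i}=\Oo_{S_i}$, and $h^0(D-S_i)=h^0(3F+S_j)=3<4$ shows $S_i$ is not a fixed component, so some section of $\Oo_X(D)$ is a nonzero constant along $S_i$ and $\mathrm{Bs}\,|D|\cap S_i=\emptyset$. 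Therefore $|D|$ is base point free, $\pi$ is a morphism, and $\deg\pi\cdot\deg\pi(X)=D^2=6$.

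\emph{The map on fibres; (1)--(3).} For a general fibre $E$ ($\sim 2F$, a smooth elliptic curve), $D|_E$ has degree $D\cdot E=4$, and the restricted net $V_E:=\mathrm{im}\,(H^0(X,D)\to H^0(E,D|_E))$ has dimension $h^0(D)-h^0(F+S_1+S_2)=3$ and is base point free (as $\mathrm{Bs}\,|D|=\emptyset$); it cuts a hyperplane in the complete, very ample degree-four system on $E$, so $\pi|_E$ is either birational onto a plane quartic or a double cover of a conic. The second alternative would force $\Oo_E(S_1)\cong\Oo_E(S_2)$ for general $E$, i.e.\ $\Oo_X(S_1-S_2)$ a pull-back from $\Pp^1$, which is impossible because $(S_1-S_2)^2=-6\neq0$. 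Hence $\pi|_E$ is birational, and so $\pi$ is birational onto a \emph{sextic} surface. Since $D\cdot S_i=0$, each $S_i$ is contracted to a point, so $\pi|_E$ fails to be injective exactly at the two-point sets $E\cap S_1$ and $E\cap S_2$; a plane quartic of geometric genus $g(E)=1$ has $\delta$-invariant $2$, which is accounted for by the two double points at $\pi(S_1),\pi(S_2)$, so these are nodes and there are no others --- this gives (3). For (1): $D|_F$ is base point free of degree $2$ on the elliptic curve $F$, with $h^0=2$ and surjective restriction, so $\pi|_F$ is the $2:1$ map $F\to\Pp^1$ onto a line. For (2): reading the degrees of $D$ on the nine components of an $I_9$-fibre off \eqref{intersections_on_X} --- they are $1,2,1$ on $B_1,A_1',A_1''$ and $0$ on the remaining six --- $\pi$ contracts those six to points, so the image is the cyclic union of $\pi(B_1)$ (a line), $\pi(A_1')$ (a conic) and $\pi(A_1'')$ (a line), a connected planar curve of degree $4$ and arithmetic genus $1$.

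\emph{Item (4) and the main obstacle.} The last point is the delicate one. When $F_2$ is irreducible it is a smooth elliptic curve with $D\cdot F_2=4$ and $\dim V_{F_2}=3$, exactly as for $E$; the new input is that the deck involution $\sigma$ of $X\to Y$ --- which exchanges $S_1\leftrightarrow S_2$ and acts on $F_2$ as a fixed-point-free translation --- forces $\Oo_{F_2}(S_1)\cong\Oo_{F_2}(S_2)$, since translation by a $2$-torsion point fixes every degree-two line bundle. Hence $D|_{F_2}=L^{\otimes2}$ is twice a $g^1_2$ $L$ whose fibres include both $S_1\cap F_2$ and $S_2\cap F_2$, and $\pi|_{F_2}$ is the projection of the elliptic quartic model $\widetilde{F_2}\subset\Pp^3$ (embedded by $L^{\otimes2}$) from the $\sigma$-fixed point $p$ that is dual to the $\sigma$-invariant hyperplane $V_{F_2}$. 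What still must be shown --- and where I expect the genuine difficulty --- is that $p$ is the vertex of one of the four quadric cones in the pencil of quadrics through $\widetilde{F_2}$; granting that, the projection is $2:1$ onto the conic at the base of that cone, which is (4). This identification of the projection centre should be forced by the full $\sigma$-equivariance of the configuration together with the positions of the contracted curves, or else can simply be read off the explicit model constructed in the later sections. Apart from (4), the entire argument is Riemann--Roch, Kawamata--Viehweg vanishing applied to Zariski decompositions, and intersection-number bookkeeping based on \eqref{cases}--\eqref{intersections_on_X}.
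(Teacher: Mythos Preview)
Your overall strategy is sound and in several places cleaner than the paper's --- in particular, deducing $h^1(D)=0$ from Kawamata--Viehweg applied to $D=K_X+P+\tfrac13(S_1+S_2)$ with $P=2F+\tfrac23(S_1+S_2)$ nef and big is a nice shortcut compared with the paper's chain of restriction sequences $0\to\Oo(3F)\to\Oo(D)\to\Oo_{S_1}\oplus\Oo_{S_2}\to0$. The base-point-freeness and the auxiliary $h^0$'s are handled correctly.

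There is, however, a genuine gap in your proof of (3). From ``$\pi|_E$ is a double cover of a conic'' you correctly deduce $\Oo_E(S_1)\cong\Oo_E(S_2)$ for general $E$, but the ``i.e.'' that follows is wrong: triviality on the generic fibre only gives $S_1-S_2\sim V$ for some \emph{vertical} divisor $V$, not a pull-back of a divisor from $\Pp^1$; and vertical divisors can perfectly well have negative self-intersection, so $(S_1-S_2)^2=-6$ is no contradiction. (In fact one can still finish along your lines: intersecting $V$ with the components of one $I_9$ fibre forces $b=5/9$ for the coefficient of $B_1$, so no integral $V$ exists --- but that is a different argument from the one you wrote.) The paper avoids this entirely by reversing the order: it first reads off (2) from the intersection numbers \eqref{intersections_on_X}, observes that the image of an $I_9$ fibre is a plane curve of degree~$4$, hence $\deg\pi(X)\ge4$; since $\deg\pi\cdot\deg\pi(X)=6$, this forces $\deg\pi(X)=6$ and $\deg\pi=1$, so $\pi|_E$ cannot be $2:1$.

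Two smaller points. In (1) you assume $F$ is smooth; the paper actually proves this by excluding the $I_2$ case (two $(-2)$-curves $R_1,R_2$) via the observation that $\pi$ would map both $R_i$ isomorphically onto the same line, forcing $S_1$ to meet both, contradicting $S_1\cdot F=1$. For (4), the paper does not defer to the explicit model: it shows directly that $H^0(X,D)|_{F_2}=\nu^*H^0(\Pp^1,\Oo(2))$ by exhibiting two $3$-dimensional subspaces $W_i=H^0(3F+S_i)\cdot H^0(S_j)\subset H^0(D)$, computing that each $W_i$ restricts to a $2$-dimensional subsystem of $\nu^*|\Oo(2)|$, and checking that their images meet in a line. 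Your formulation via the $\sigma$-fixed projection centre is equivalent, but it does need an argument like this to identify that centre with a cone vertex.
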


\begin{proof}
We see immediately that
$$D^2=(3F+S_1+S_2)^2=6 FS_1 + 6 F S_2 +S_1^2 + S_2^2 = 12 - 3 -3 = 6.
$$
Therefore, $\chi(D)=\frac 12 D(D-F) + \chi(\Oo_X) = \frac 12 (6 - 2) + 2 = 4$.

\smallskip
Consider the short exact sequence
\begin{equation}\label{3F_D_ses}
0\to \Oo(3F) \to \Oo(D) \to \Oo(D)\vert_{S_1}\oplus \Oo(D)\vert_{S_2}\to 0.
\end{equation}
We know that the bicanonical map of $X$ is the elliptic fibration and has $\Pp^1$ as its image. Thus the canonical ring of $X$ is a polynomial ring with generators of weights $1$ and $2$, corresponding to $F$ and $F_2$,  so
$h^0(3F)=2$. Together with the Euler characteristics calculation and $h^2(3F)=h^0(-2F)=0$ this implies that
$h^1(3F)=0$.

\smallskip
Because of $(3F+S_i)S_i=3-3=0$, we know that the restrictions of the sheaf  $\Oo(D)$ to either $S_i$ is isomorphic to the structure sheaf.
Thus the long exact sequence in cohomology associated to \eqref{3F_D_ses} implies that $\dim H^0(X,\Oo(D))= 2 +1+ 1 = 4$. The same long exact sequence implies $h^1(D)=h^2(D)=0$.

\smallskip
Let us now prove that $H^0(X,\Oo(D))$ is base point free.
The long exact sequence associated to \eqref{3F_D_ses} implies that there are sections which restrict to non-zero constants on $S_1$ and $S_2$,
and the base locus of $H^0(X,\Oo(D))$ is contained in that of  $H^0(3F)$. We know that this space is generated by the sections with divisors $3F$ and $F+F_2$. Therefore, the base locus of $H^0(X,\Oo(D))$ is contained in $F$.
Consider the short exact sequence
$$
0\to \Oo(2F+S_1+S_2) \to \Oo(D) \to \Oo(D)\vert_{F}\to 0.
$$
Since $S_i(2F+S_1+S_2)<0$, either $S_i$ is a base component of $|2F+S_1+S_2|$, hence $h^0(2F+S_1+S_2)=h^0(2F)=2$. Since $h^2(2F+S_1+S_2)=0$, Riemann-Roch implies that $h^1(2F+S_1+S_2)=0$ and $h^0(\Oo(D)\vert_{F})=2$.  If $F$ is irreducible, then it is an elliptic curve and the restriction of $\Oo(D)$ to $F$ is the full linear system of degree two, hence is base point free, which implies that so is  $H^0(X,\Oo(D))$. Furthermore $F$ maps $2:1$ onto a line, which passes through the points $\pi(S_1)$ and $\pi(S_2)$. If $F$ is of type $I_2$, i.e., $F=R_1+R_2$ for two $(-2)$-curves $R_i$, then the restriction of $\Oo(D)$ to either $R_i$ is the full linear system of degree one, hence it is base point free and so is $H^0(X,\Oo(D))$, and $R_i$ maps $1:1$ onto a line $L_i$. Since $R_1$ and $R_2$ intersect at two distinct points, we see that $L_1=L_2$, but then $S_1$ must intersect both $R_1$ and $R_2$, contradicting $S_1F=1$.

Looking at the intersection number of $D$ with each component of the $I_9$-fibers, we easily see that each $I_9$-fiber maps to a union of a conic and two lines. Since the image of a fiber is contained in a hyperplane section of $\pi(X)$,  the degree of $\pi(X)$ is at least 4, hence must be 6.

%The calculation $D^2=6$ implies that either the image is a sextic, as claimed, or $H^0(X,\Oo(D))$ gives a triple cover of a quadric, or it gives a double cover of a cubic. We assume the most likely option of a sextic, which is  later confirmed by calculations.

The restriction of $D$ to a general smooth fiber $H$ of $X\to \Pp^1$ gives the short exact sequence
$$
0\to \Oo(F+S_1+S_2) \to \Oo(D) \to \Oo(D)\vert_{H}\to 0.
$$
The corresponding long exact sequence shows that $H^0(X,\Oo(D))$ restricts to a 3-dimensional linear subspace of the 4-dimensional space of sections of a degree four line bundle on the elliptic curve $H$. The corresponding $\Pp^2$ contains the
line which is the image of $F$. The images of the fibers $H$ are degree four curves in $\Pp^2$ of genus one, unless they are double covers of conics. In the latter case, $\pi(X)$ would have degree $<6$, a contradiction.
%As a consequence, we may rule out the possibility of $X\to \Cc\Pp^3$ being a triple cover of a quadric, because in this case $H$ would need to map to a line.
%It remains to rule out the possibility of a double cover of a cubic.
%{\bf to be continued}

 Assume that $F_2$ is irreducible. Then it is an elliptic curve and the corresponding long exact sequence shows that $H^0(X, D)$ restricts to a 3-dimensional linear subspace of the 4-dimensional space $H^0(F_2, D\vert_{F_2})$.
 % of sections of a degree four line bundle on $F_2$.
 Let $a, a'$, possibly $a=a'$,  be the intersection points of $F_2$ and $S_1$. Then $F_2\cap S_2=\{a+t, a'+t\}$ for a fixed 2-torsion point $t\in F_2$, because the deck transformation of $X$ acts freely on $F_2$ and switches $S_1$ and $S_2$.
 %Since any pair of points on an elliptic curve defines a double cover of $\Pp^1$, we may assume that $a'=-a$.
  Let $\nu: F_2\to \Pp^1$ be the double cover given by the degree two linear system $|a+a'|$ on $F_2$. We claim that
 $$H^0(X, D)\vert_{F_2}=\nu^*H^0(\Pp^1,\Oo(2))$$
 as 3-dimensional subspaces of $H^0(F_2, D\vert_{F_2})$.  To prove this, consider the subspace $$W_1:=H^0(X, 3F+S_1)\times H^0(S_2)\subset H^0(X, D).$$ Since  $h^0(3F)=2$, $h^1(3F)=0$, and $\Oo(3F+S_1)$ restricts to the structure sheaf of $S_1$, we see that $h^0(3F+S_1)=3$, hence $\dim W_1=3$. It is easy to compute  $h^0(F+S_1)=1$, $h^1(F+S_1)=0$, which implies that $H^0(3F+S_1)$ restrict to the full linear system of $H^0(F_2, (3F+S_1)\vert_{F_2})$. The latter space equals the 2-dimensional space $H^0(F_2, a+a')$ of the degree two line bundle $\Oo_{F_2}(a+a')$. Thus $W_1$ restricts to the 2-dimensional space corresponding to the 1-dimensional linear system $|a+a'|+(a+t)+(a'+t)$.
 Since $(a+t)+(a'+t)\in |a+a'|$, this 1-dimensional linear system belongs to the linear system of $\nu^*H^0(\Pp^1,\Oo(2))$. Similarly, $W_2:=H^0(X, 3F+S_2)\times H^0(S_1)\subset H^0(X, D)$ restricts to the 2-dimensional space corresponding to the linear system $a+a'+|(a+t)+(a'+t)|$. The two 2-dimensional spaces $W_i\vert_{F_2}$ in $\nu^*H^0(\Pp^1,\Oo(2))$ has  1-dimensional intersection, which corresponds to the unique divisor $a+a'+(a+t)+(a'+t)$. The claim and the last  assertion is proved.
\end{proof}

We remark that $F_2$, if reducible, maps onto a union of two conics.

\begin{remark}
We eventually expected that a fake projective plane can be identified as such once we have its explicit equations, as we did in Section 2. As a consequence, we felt free to pursue the most likely scenarios rather than try to exhaustively exclude all degenerate cases, since the justification of our approach will be in its final result. This liberating philosophy is similar to the physicists' approach to mathematics: anything goes as long as the final answers concur with experiments. In particular, we assume that $F_{2}$ is irreducible.
%we assume that $F$ and $F_{2}$ are irreducible.
%To make it clear that our proofs don't necessarily take into account all possible subtleties, we will use the name Observation, in place of Proposition.
\end{remark}

\section{Breakthrough: equation of the image of the double cover $X$}
 %4. Equation of the image of the double cover of resolution of FPP/Z^7. Notable curves on the double cover.
%   Normalization of the hypersurface.
In this section we describe the major breakthrough that allowed us to eventually write down the equations of the fake projective plane.
Specifically, we describe the method that allowed us to find the  $\Zz_2$-invariant sextic in $\Cc\Pp^3$ which gives
a (highly singular) birational model of the double cover $X$ of the resolution of the $\Zz_7$-quotient.

\smallskip
The action of the covering involution $\sigma$ on $X$ leads to an involution on $H^0(X,D)$ which has two-dimensional eigenspaces.
We observe that there are two natural, up to scaling, elements  $y_0$ and $y_1$ of $H^0(X,\Oo(D))$ which correspond to divisors
$$
F+F_2 + S_1 +S_2,~~3F+S_1+S_2
$$
respectively. We will linearize the action of the covering involution $\sigma$ so that $\sigma(y_0)=y_0$ and $\sigma(y_1)=-y_1$.
We pick other basis elements of the eigenspaces and denote them by $y_2$ and $y_3$.

\smallskip
We know that the images of $S_1$ and $S_2$ are disjoint points on $(0:0:*:*)$ which are permuted by the involution. We can scale
$y_2$ and $y_3$ to ensure that these are $(0:0:-1:1)$ and $(0:0:1:1)$ respectively. For generic $a$ the divisor of $y_0-a y_1$ is
$F_1+S_1+S_2+H_a$ where $H_a$ is a fiber of $X\to\Cc\Pp^1$. Note that $S_1$ and $S_2$ intersect $H_a$ in two points each. These points
need to map to the the same point in $\Cc\Pp^3$ which leads to the statement  in Proposition \ref{crucial} that the image of $H_a$ is a nodal plane quartic with two nodes.

\smallskip
We also know that $F_2$ maps $2:1$ onto a conic.

\smallskip
Putting it all together, the geometry of $\pi:X\to \Cc\Pp^3$ implies the following.

\begin{itemize}
\item The involution acts by $y_i \mapsto (-1)^i y_i$. The sextic $f$ is invariant with respect to this involution.

\item The sections $y_0$ and $y_1$ are zero on $S_1$ and $S_2$. These are automatically zero on $F$.

\item The section $y_1=0$ corresponds to the divisor $3F + S_1 +S_2$ and the section $y_0=0$ corresponds to $F + F_2 +S_1 +S_2$.
The image of $F$ is $(0:0:*:*)$. This is a $2:1$ cover, so $f=0$ has singularities along $(0:0:*:*)$.

\item For $a\neq 0$ the restriction of $f$ to $x_0=a x_1$ is
$$f(ay_1,y_1,y_2,y_3) = y_1^2 g_a(y_1,y_2,y_3)$$
where $g_a=0$ is a degree four curve which has nodes
at $(0:\pm 1:1)$.

\item For $a\neq 0$ the quartic $g_a=0$ is irreducible, except for $a=\pm 1$ that correspond to the images of the $I_9$ fibers. (We can fix $a=\pm 1$ for the location of $I_9$ fiber by scaling $y_0$ and $y_1$.)

\item The restriction to $y_1=0$ is given by
$$
f(y_0,0,y_2,y_3) = y_0^6.
$$
Indeed, we must have a multiple of $F_1$ (since $S_1$ and $S_2$ map to points). This means that this should be a multiple of $y_0$
and we can scale it to be $y_0^6$.

\item The restriction of $f$ to $y_0=0$ is
given by
$$
f(0,y_1,y_2,y_3) = y_1^2 h_0^2(y_1,y_2,y_3)
$$
where $h_0=0$ is a $\sigma$-invariant conic that passes through $(0:\pm 1:1)$. The surface $f=0$
has singularities along $(0:h_0=0)$.
\end{itemize}

There are additional restrictions on $f=0$ that come from the geometry of the $I_9$ fibers.
Without loss of generality, let us assume that the fiber at $y_0=y_1$ corresponds to the image of the cycle of curves
$$
A_1-B_1-C_1-A_1'-B_1'-C_1'-A_1''-B_1''-C_1''-A_1
$$
and the $y_0=-y_1$ fiber corresponds to the cycle $A_2-\ldots-C_2''-A_2$.

\medskip
The intersection numbers \label{intersections_on_X} imply that
$$
DA_1'=2,~ DA_1''= 1,DB_1=1
$$
so the degree four genus one curve with two nodes degenerates into two lines $\pi(A_1'')$ and $\pi(B_1)$ and a conic $\pi(A_1')$.
The other six rational curves of this $I_9$ fiber are contracted to singular points. The line $\pi(B_1)$ must pass through $\pi(S_1)=(0:0:-1:1)$
as does the conic $\pi(A_1')$. The line $\pi(A_1'')$ passes through the other node $\pi(S_2)=(0:0:1:1)$. These lines intersect at some point $P$
which we can set to be $P=(1:1:0:0)$ by adding multiples of $y_0$ and $y_1$ to $y_2$ and $y_3$ respectively. Moreover we see
that
$$
P=\pi(B_1'')=\pi(C_1'')=\pi(A_1)
$$
so the surface $\pi(X)$ has at least an $A_3$ type singularity at $P$. In particular, the partial derivatives and the derivative of the Hessian matrix vanish at $P$. In addition, we have a singular point  $\pi(C_1)$ at the intersection of the line $\pi(B_1)$ and the conic $\pi(A_1')$ which is different from
$\pi(S_1)=(0:0:-1:1)$. We also have a singular point
$$
\pi(B_1')=\pi(C_1')
$$
at the intersection of the conic $\pi(A_1'')$ and the line $\pi(A_1'')$ that is different from $\pi(S_2)=(0:0:1:1)$.

\medskip
We immediately see from the intersection numbers that
$$
D S_1' = D S_2' =  DS_1''=DS_2''=3.
$$
We focus specifically on $S_1''$. Note that $S_1''$ intersects both $B_1''$ and $A_1$, which means that $\pi(S_1'')$ passes through $(1:1:0:0)$ twice.
Thus it should be a planar degree three rational nodal curve. This turned out to be a key observation that allowed us to get enough equations on the
coefficients of $f$ to solve for it.

\begin{proposition}\label{FyS}
The sextic equation $f(y_0,y_1,y_2,y_3)=0$ where
$$
f= 28 y_0^6 -(42 - 2 \ii \sqrt{7}) y_0^4 y_1^2 - 4 \ii \sqrt{7} y_0^2 y_1^4 +
 56 y_0^2 y_1^2 y_2^2 - (14 + 22 \ii  \sqrt{7}) y_0^4 y_1 y_3
 $$
 $$-(7 -
    13 \ii \sqrt{7}) y_0^2 y_1^2 y_3^2 - (77 +
    17 \ii \sqrt{7}) y_1^4 y_3^2 + (21 - 31 \ii \sqrt{7}) (y_0^3 y_1 y_2 y_3 -
    y_0 y_1^3 y_2 y_3)
$$
$$
-(28 - 20 \ii \sqrt{7}) y_1^3 y_3 (y_1^2 + y_2^2 -
    y_3^2) + (14 + 2 \ii \sqrt{7}) y_1^2 (y_1^4 +
    2 y_1^2 y_2^2 + (y_2^2 - y_3^2)^2)
$$
$$
 + (42 +
    2 \ii \sqrt{7}) (y_0^2 y_1^3 y_3 +
    y_0 y_1^2 y_2 (-y_0^2 + y_1^2 + y_2^2 - y_3^2))
$$
cuts out a surface which has the same expected properties as the image of the double cover $X$ under the map
given by $\vert 3F+S_1+S_2\vert$.
\end{proposition}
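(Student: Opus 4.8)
The plan is to verify, one at a time, the geometric constraints collected in the bulleted list immediately preceding the statement; these are exactly the properties of $\pi(X)$ recorded in Proposition~\ref{crucial} and the ensuing discussion of the $I_9$-fibers, so the proposition asserts nothing more than that the explicit $f$ is consistent with all of them. I would start with the properties that are visible by inspection of the displayed polynomial. Every monomial of $f$ has even total $y_1$-plus-$y_3$-degree, so $f(y_0,-y_1,y_2,-y_3)=f$, which is the $\sigma$-invariance. Every monomial of $f$ lies in $(y_0,y_1)^2$, so $f$ and all its first partials vanish along $\ell=\{y_0=y_1=0\}$; hence $\{f=0\}$ is singular along $\ell$, the image of $F$, and moreover $f(ay_1,y_1,y_2,y_3)=y_1^2 g_a(y_1,y_2,y_3)$ with $g_a$ a quartic for every $a$, namely the residual plane quartic in the fiber plane $\{y_0=ay_1\}$. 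Finally every monomial except $28 y_0^6$ is divisible by $y_1$, so $f(y_0,0,y_2,y_3)=28 y_0^6$, matching the required $f|_{y_1=0}=y_0^6$ up to the irrelevant scalar.

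The only substantial algebra is at $\{y_0=0\}$. Collecting the $y_0$-free part of $f$ and setting $u=y_1^2+y_2^2-y_3^2$ and $v=y_1 y_3$ (both $\sigma$-invariant, with the identity $y_1^4+2y_1^2 y_2^2+(y_2^2-y_3^2)^2=u^2+2v^2$), this part equals $y_1^2\big((14+2\ii\sqrt{7})u^2-(28-20\ii\sqrt{7})uv-(49+13\ii\sqrt{7})v^2\big)$. The crux is the arithmetic identity $(28-20\ii\sqrt{7})^2=4(14+2\ii\sqrt{7})(-49-13\ii\sqrt{7})$ in $\Qq(\sqrt{-7})$, which shows this binary quadratic is a perfect square $h_0^2$ for a $\sigma$-invariant conic $h_0=\alpha u+\beta v$; since $u$ and $v$ both vanish at $(y_1:y_2:y_3)=(0:\pm1:1)$, so does $h_0$. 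To obtain that $\{f=0\}$ is singular along the conic $\{y_0=0,\,h_0=0\}$ — the incarnation of $F_2$ mapping $2:1$ onto a conic — I would extract the coefficient $f_1$ of $y_0^1$ in $f$, note that $f_1=y_1^2 y_2\big((42+2\ii\sqrt{7})u-(21-31\ii\sqrt{7})v\big)$, and check $h_0\mid f_1$ using the companion identity $(42+2\ii\sqrt{7})^2(-49-13\ii\sqrt{7})=(21-31\ii\sqrt{7})^2(14+2\ii\sqrt{7})$; combined with $f|_{y_0=0}=y_1^2 h_0^2$ (which kills the remaining partials along this conic) this gives the asserted singular locus.

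Next come the fiber quartics. That $g_a$ has double points at $(0:\pm1:1)$ follows from the surface being singular along $\ell$: a local expansion of $f$ at the two points $(0:0:\pm1:1)$ shows multiplicity exactly two and a tangent cone transverse to every plane of the pencil $\{y_0=ay_1\}$, so the section $g_a$ acquires an ordinary node there for generic $a$. For the $I_9$-fibers I would compute $g_1=f(y_1,y_1,y_2,y_3)/y_1^2$ and factor it as a product of a conic and two lines, deduce the corresponding statement for $g_{-1}$ from the $\sigma$-equivariance interchanging $g_a$ and $g_{-a}$, and then verify by a local computation that $\{f=0\}$ has an $A_3$ singularity at $P=(1:1:0:0)$ (the common image of $A_1$, $B_1''$, $C_1''$) and, by symmetry, at $(-1:1:0:0)$. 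Irreducibility of the generic $g_a$ I would handle by a closedness argument: the set of $a\in\Cc\Pp^1$ with $g_a$ reducible or non-reduced is the preimage of the discriminant locus of plane quartics under the morphism $a\mapsto[g_a]$, hence closed; it contains $0,\pm1$; and since it is not all of $\Cc\Pp^1$ (one checks irreducibility of $g_a$ for a single convenient value of $a$) it is finite, so for all but finitely many $a$ the curve $g_a$ is an irreducible quartic with exactly the two prescribed nodes. The same mechanism shows $f$ is irreducible as a polynomial, since a factor would restrict to a power of $y_1$ on every plane of the pencil and therefore be a power of $y_1$, which is impossible as $y_1\nmid f$. The residual configurational data — that the planar rational nodal cubic through $P$ used in Section~4 to pin down $f$ really is cut out by $f$ on the relevant hyperplane, and the locations of the remaining contracted points $\pi(C_1)$ and $\pi(B_1')=\pi(C_1')$ — are confirmed by a few more direct plane-section computations.

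The main obstacle is precisely the two $\Qq(\sqrt{-7})$-identities above: they are what make $f|_{y_0=0}$ a perfect square and force $h_0\mid f_1$, and they are exactly the constraints solved for in Section~4, so everything downstream of them is mechanical bookkeeping. The one remaining delicate point is the control of the generic residual quartic $g_a$ — that it carries only the two prescribed nodes and is irreducible — because $\{y_0=ay_1\}$ is the \emph{special} pencil of planes through the singular line $\ell$ of the surface, so Bertini does not apply directly; the closedness-plus-specialization argument is the substitute. I would also stress in the write-up that this proposition only certifies that $\{f=0\}$ has the expected numerology and degeneration pattern of $\pi(X)$; it does not by itself identify this surface with $\pi(X)$, which is confirmed only a posteriori once the equations of $Z$ are produced and $Z$ is shown to be a fake projective plane in Theorem~\ref{main}.
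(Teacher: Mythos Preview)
Your proposal is correct and in spirit matches the paper's approach, which is simply direct verification of the bulleted properties; the paper itself gives no proof beyond the remark that the formula was found numerically via Mathematica and ``can then be checked to give the desired properties.'' You supply what the paper omits: an explicit by-hand outline of that check, and in particular you isolate the two $\Qq(\sqrt{-7})$ identities
\[
(28-20\ii\sqrt{7})^2=4(14+2\ii\sqrt{7})(-49-13\ii\sqrt{7}),\qquad
(42+2\ii\sqrt{7})^2(-49-13\ii\sqrt{7})=(21-31\ii\sqrt{7})^2(14+2\ii\sqrt{7}),
\]
which are exactly what force $f|_{y_0=0}=y_1^2 h_0^2$ and the singularity of $\{f=0\}$ along the conic $\{y_0=h_0=0\}$ (and indeed recover the conic $h_0=y_1^2+y_2^2-y_3^2+\tfrac14(-1+3\ii\sqrt{7})y_1y_3$ recorded later in Table~\ref{curves}).

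One small point: your irreducibility sketch for $f$ is slightly garbled as written. The clean version is that generic irreducibility and reducedness of the residual plane quartic $g_a$ (which you already argue) immediately force the surface $\{f=0\}$ to be irreducible and reduced, hence $f$ irreducible; the ``factor restricts to a power of $y_1$'' phrasing needs the extra observation that such a factor would then lie in $(y_0,y_1)$, contradicting $y_0^6\in f$. Your closing caveat---that the proposition only certifies the expected numerology of $\pi(X)$ and that the identification is confirmed a posteriori via Theorem~\ref{main}---is exactly right and worth keeping.
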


\begin{remark}
It is clear that complex conjugation provides another surface with the same properties that comes from the complex conjugate fake projective plane.
\end{remark}

\medskip
We remark that the formula of Proposition \ref{FyS} was obtained by writing down a generic invariant sextic that satisfied the properties and then using Mathematica software package to write down equations on the coefficients. The equations are too complicated to be solved symbolically, but numerical solutions give values
that "look like" algebraic numbers. This allow us to identify a putative equation, which can then be checked to give the desired properties.

\medskip
We now describe the images of the $24$ curve $S_1,\ldots, S_2'',A_1,\ldots,C_2''$ on $\pi(X)$.
The curve $S_1''$ was found in the process of getting Proposition \ref{FyS}. The curve $S_2''$ is obtained by
simply applying the involution $\sigma$. It took a bit of effort to find $S_1'$. The idea is that there should be an order
three automorphism that acts fiberwise on $X\to \Cc\Pp^1$ and sends $S_1\to S_1'\to S_1''$. { This automorphism 
is a lift of the order 3 automorphism acting on the quotient $\Pp_{\fake}^2 /\Zz_7$.
}
Each of the curves
$S_1$, $S_1'$ and $S_1''$ have two points in the generic fiber, which give two orbits under addition of an element of order three.
Thus, if we parameterize $S_1''$ as $S_1''(t)$ there should be a point $S_1'(t)$ in the fiber so that
$$
S_1'(t) + S_1''(t) = (S_1)_1+(S_1)_2
$$
where $(S_1)_i$ are two preimages of the node $\pi(S_1)$. Since the preimage of the class of the line in $\Cc\Pp^2$ that contains the fiber is
$ (S_1)_1+(S_1)_2+ (S_2)_1+(S_2)_2$ we see that the fourth intersection point of the line through the node $\pi(S_2) = (0:0:1:1)$ and $S_1''(t)$ with
the quartic image of the fiber should
give parameterization of $S_1'$. We write the corresponding equations in Table \ref{curves}.

\begin{table}[htp]
\caption{Images of curves on $X$ under the map $\pi:X\to \Cc\Pp^3$
(The equations are either parametric or non-parametric; the curves $\pi(S_2),\ldots, \pi(C_2'')$ can be found by
applying $\sigma(y_0:y_1:y_2:y_3)=(y_0:-y_1:y_2:-y_3)$ to $\pi(S_1),\ldots, \pi(C_1'')$.)}
\begin{center}
$$
\begin{array}{|c|l|}
\hline
{\rm Curves} & {\rm Equations}\\[.2em]
\hline\hline
\pi(F)& y_0=y_1=0\\[.5em]
\hline
\pi(F_2)& y_0=0,~y_1^2 + y_2^2 + \frac 14 (-1 + 3 \ii \sqrt{7}) y_1 y_3 - y_3^2=0
\\[.5em]
\hline
\pi(S_1)& (0:0:-1:1)\\[.5em]
\hline
\pi(S_1')&
y_0 =  \frac 18 (11 - \ii \sqrt{7})t  + \frac 18(-3  +\ii\sqrt{7}) t^3
\\[.2em]
&y_1=t^3
\\[.2em]
&
y_2 =  \frac 18 (11 - \ii \sqrt{7}) + \frac 18 (-1 + 3 \ii\sqrt{7}) t -
\frac 18 ( 5 +\ii \sqrt{7}) t^2 + \frac 18 (3 - \ii \sqrt{7}) t^3,
\\[.2em]
&
y_3=-\frac 1{16}(9 + 5 \ii \sqrt{7})+
    \frac 1{16} (11 - \ii \sqrt{7})t + \frac 1{16} (21 + \ii\sqrt{7})t^2 - \frac 1{16}(7 - 5 \ii \sqrt{7}) t^3
\\[.5em]
\hline
\pi(S_1'')&
y_0 =  \frac 18 (11 - \ii \sqrt{7})t  + \frac 18(-3  +\ii\sqrt{7}) t^3
\\[.2em]
&
y_1=t^3
\\[.2em]
&
y_2= \frac 1{16} {(-9 - 5 \ii \sqrt{7} + (11 - \ii \sqrt{7}) t) (-1 + t^2)}
\\[.2em]
&
y_3= \frac 18{(11 - \ii \sqrt{7} +(- 1 + 3 \ii \sqrt{7}) t) (-1 + t^2)}
\\[.5em]
\hline
\pi(A_1)&
(1:1:0:0)
\\[.5em]
\hline
\pi(B_1)&
y_0 = y_1,~y_2=-y_3
\\[.5em]
\hline
\pi(C_1)&
(1:1:
-\frac 14  (3 +\ii \sqrt{7}) , \frac 14  (3 +\ii \sqrt{7}) )
\\[.5em]
\hline
\pi(A_1')&
y_0=y_1,
\\[.2em]&
\frac 12(11 - \ii \sqrt{7}) y_1^2 + \frac 14 (11 - \ii \sqrt{7}) y_1 y_2 + y_2^2 +
 \frac 12 (-1 + 3 \ii \sqrt{7}) y_1 y_3 - y_3^2=0
\\[.5em]
\hline
\pi(B_1')&
(-1:-1:\frac 12 (1 - \ii \sqrt{7}):\frac 12 (1 - \ii \sqrt{7}))
\\[.5em]\hline
\pi(C_1')&
(-1:-1:\frac 12 (1 - \ii \sqrt{7}):\frac 12 (1 - \ii \sqrt{7}))
\\[.5em]\hline
\pi(A_1'')&
y_0=y_1,~y_2=y_3
\\[.5em]\hline
\pi(B_1'')&
(1:1:0:0)
\\[.5em]\hline
\pi(C_1'')&
(1:1:0:0)
\\[.5em]
\hline
\end{array}
$$
\end{center}
\label{curves}
\end{table}

\begin{remark}\label{cones}
The construction of $S_1'$ has an additional advantage of providing us with a rational function on $Y$ which has well-understood zeros and poles.
Specifically, the section
$$
\Big(
y_0^3 - y_0^2 y_1 - y_0 y_1^2 + y_1^3 +
 \frac 12 (1 + \ii \sqrt{7}) (y_0-y_1) y_1 (y_2 - y_3)
 +
 \frac 14  (-1 + \ii\sqrt{7}) y_1 (y_2 - y_3)^2
 \Big)
$$
defines a (nonnormal) cubic cone with vertex $(0:0:1:1)$ that contains $S_1'$ and $S_1''$. Its symmetrization $f_{cones}(y_0,y_1,y_2,y_3)$
given by
$$
\Big(
y_0^3 - y_0^2 y_1 - y_0 y_1^2 + y_1^3 +
 \frac 12 (1 + \ii \sqrt{7}) (y_0-y_1) y_1 (y_2 - y_3)
 +
 \frac 14  (-1 + \ii\sqrt{7}) y_1 (y_2 - y_3)^2
 \Big)
$$
$$
\Big(
y_0^3 + y_0^2 y_1 - y_0 y_1^2 - y_1^3 -
 \frac 12 (1 + \ii \sqrt{7}) (y_0+y_1) y_1 (y_2 + y_3)
 -
 \frac 14  (-1 + \ii\sqrt{7}) y_1 (y_2 + y_3)^2
 \Big)
$$
gives a $\sigma$-invariant section of $H^0(X,6D)$ which contains $S_1'+S_2'+S_1''+S_2''$. In fact, we were able to
show that its degree $36$ intersection curve with $\pi(X)$ is fully accounted for by the curves from our list of $24$ rational curves
as well as $F$. The $\sigma$-invariant rational function
$$
\frac {f_{cones}(y_0,y_1,y_2,y_3)}{(y_0^2-y_1^2)^{3}}
$$
on $X$ gives a rational function on $Y$ whose divisor is
$$
2A-3A'+A'' - B - B' +2B'' -2C+2C''-2S+S'+S''.
$$
\end{remark}

\medskip
The curves $A_1,\ldots,C_1''$ are either contracted  to points or map isomorphically to lines or conics  in the plane $y_0=y_1$, as indicated in
Table \ref{curves}.

\medskip
An important part of our calculations will be based on finding a putative normalization of the ring
$$
\Cc[y_0,y_1,y_2,y_3]/\la f(y_0,y_1,y_2,y_3)\ra.
$$
\begin{proposition}\label{normalization}
The rational functions
$$
\hat y_4 = \frac {y_0^3}{y_1}
$$
$$
\hat y_5 = \frac {(y_1^2 + y_2^2 + \frac 14 (-1 + 3 \ii \sqrt{7}) y_1 y_3 - y_3^2) y_1}{y_0}
$$
lie in the normalization of  $\Cc[y_0,y_1,y_2,y_3]/\la f(y_0,y_1,y_2,y_3)\ra$ in its field of fractions. These elements are odd with respect to the involution
$\sigma$ and are homogeneous with grading $2$.
\end{proposition}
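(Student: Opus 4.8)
The plan is to prove integrality directly, by exhibiting for each of $\hat y_4$ and $\hat y_5$ an explicit monic quadratic equation over $R:=\Cc[y_0,y_1,y_2,y_3]/\langle f\rangle$; since $f$ is irreducible (the sextic $\{f=0\}$ being the image of the irreducible surface $X$), $R$ is a domain, and such relations place both elements in the integral closure of $R$ in its field of fractions, i.e.\ in the normalization. The two auxiliary assertions cost nothing: with $\sigma$ acting by $(y_0,y_1,y_2,y_3)\mapsto(y_0,-y_1,y_2,-y_3)$ one has $\sigma(\hat y_4)=-\hat y_4$, and since the quadratic form $Q:=y_1^2+y_2^2+\tfrac14(-1+3\ii\sqrt7)y_1y_3-y_3^2$ occurring in $\hat y_5$ is $\sigma$-invariant, $\sigma(\hat y_5)=-\hat y_5$; and $\hat y_4=y_0^3/y_1$, $\hat y_5=y_1Q/y_0$ are homogeneous of weight $3-1=2$ and $2+1-1=2$ respectively.

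For $\hat y_4$ I would use the shape of $f$ as a polynomial in $y_1$. Reading off the formula of Proposition \ref{FyS}, the coefficient of $y_1^0$ is $28\,y_0^6$ and the coefficient of $y_1^1$ is $y_0^3 q_1$, where $q_1:=y_3\bigl(-(14+22\ii\sqrt7)y_0+(21-31\ii\sqrt7)y_2\bigr)$ is quadratic; thus $f=28\,y_0^6+y_0^3y_1q_1+y_1^2g_4$ for an explicit quartic form $g_4$. Working modulo $f$ and dividing by $y_1^2$ in the field of fractions yields
\[
\hat y_4^{\,2}+\tfrac1{28}q_1\,\hat y_4+\tfrac1{28}g_4=0,
\]
a monic relation with $\tfrac1{28}q_1\in R_2$ and $\tfrac1{28}g_4\in R_4$; in particular $\hat y_4$ is integral over $R$.

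For $\hat y_5$ I would expand $f$ in powers of $y_0$ instead. Two polynomial identities must be extracted from Proposition \ref{FyS}: the $y_0^0$-term of $f$ equals $(14+2\ii\sqrt7)\,y_1^2Q^2$ — geometrically, the plane $y_0=0$ meets $\pi(X)$ in twice the line $\pi(F)$ plus twice the conic $Q=0=\pi(F_2)$ of Table \ref{curves} — and the $y_0^1$-term equals $(42+2\ii\sqrt7)\,(y_1y_2)(y_1Q)$, the latter resting on the factorization $(42+2\ii\sqrt7)Q=-(21-31\ii\sqrt7)y_1y_3+(42+2\ii\sqrt7)(y_1^2+y_2^2-y_3^2)$. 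Hence $f=(14+2\ii\sqrt7)(y_1Q)^2+(42+2\ii\sqrt7)\,y_0(y_1y_2)(y_1Q)+y_0^2h_4$ with $h_4$ an explicit quartic form, and reducing modulo $f$ and dividing by $y_0^2$ gives the monic relation
\[
\hat y_5^{\,2}+\tfrac{42+2\ii\sqrt7}{14+2\ii\sqrt7}\,(y_1y_2)\,\hat y_5+\tfrac1{14+2\ii\sqrt7}\,h_4=0,
\]
so $\hat y_5$ is integral over $R$ as well.

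The only genuine computation is the verification of these structural facts about the explicit $f$; the delicate one is $f|_{y_0=0}=(14+2\ii\sqrt7)y_1^2Q^2$, which amounts to matching all coefficients of $Q^2$ and involves a little arithmetic in $\Zz[\ii\sqrt7]$, the other divisibilities being immediate inspection. That the $y_1^0$- and $y_0^0$-terms of $f$ are, up to scalars, $y_0^6$ and $y_1^2Q^2$ and that the next coefficients are divisible by $y_0^3$ and by $y_1Q$ is not accidental: it is forced by the geometry of $\pi:X\to\Cc\Pp^3$ of Proposition \ref{crucial}. Concretely, $\hat y_4$ viewed on $X$ is regular with effective divisor $3F_2+2S_1+2S_2\sim 2(3F+S_1+S_2)$, and together with its companion $\hat y_5$ these are precisely the two weight-$2$ elements of the field of fractions that lie outside $R_2$; one could run the whole argument through such divisor computations on $X$, but the explicit monic relations above are self-contained and in addition record the coefficients needed in the sequel.
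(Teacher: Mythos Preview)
Your proposal is correct and follows exactly the approach the paper indicates: the paper's proof reads in its entirety ``It is straightforward to see that $\hat y_4$ and $\hat y_5$ satisfy monic quadratic equations with coefficients in the ring. The parity and grading are obvious.'' You have simply supplied those monic quadratics explicitly, and your computations (the $y_1$-expansion for $\hat y_4$ and the $y_0$-expansion for $\hat y_5$, with the identification $f|_{y_0=0}=(14+2\ii\sqrt7)\,y_1^2Q^2$) check out.
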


\begin{proof}
It is straightforward to see that $\hat y_4$ and $\hat y_5$ satisfy monic quadratic equations with coefficients in the ring. The parity and grading are obvious.
\end{proof}

\begin{remark}
We believe that $y_0,\ldots,y_3,\hat y_4,\hat y_5$ generate the normalization of the ring  $\Cc[y_0,y_1,y_2,y_3]/\la f(y_0,y_1,y_2,y_3)\ra$
which is isomorphic to
$$
\bigoplus_{k\geq 0} H^0(X,\Oo(kD)).
$$
Moreover, we have calculated generators of the ideal of relations between $y_0,\ldots,y_3, \hat y_4, \hat y_5$. Since we do not need this information for our purposes, we will not present it in the paper. However, we do use the fact that $\hat y_4$ and $\hat y_5$ give odd sections of $H^0(X,\Oo(2D))$.
\end{remark}

\section{Order three automorphism.}
%5. Order three automorphism of the double cover.
An important feature of $X$ is an order three automorphism { which is  a lift of the order 3 automorphism acting on the quotient $\Pp_{\fake}^2 /\Zz_7$.} In this section we describe how to find an explicit formula for it in terms of the
birational automorphism of the sextic surface $\pi(X)\subset \Cc\Pp^3$.

\begin{proposition}\label{auto}
Let $Y_0=\frac {y_0}{y_1}$, $Y_2=\frac {y_2}{y_1}$ and $Y_3=\frac {y_3}{y_1}$ be the generators of the field extension ${\mathrm {Rat}}(X)\supset \Cc$.
The automorphism of order three sends $(Y_0,Y_2,Y_3)$ to $(Y_0, Y_2',Y_3')$ given by Table \ref{auto.table}. Its inverse
sends  $(Y_0,Y_2,Y_3)$ to $(Y_0, Y_2'',Y_3'')$ given by Table \ref{auto.table.inverse}.
\end{proposition}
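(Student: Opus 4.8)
The plan is to first establish the existence of the order-three automorphism abstractly, then record the structural constraints it must satisfy, explain how the curve data of Section~4 determines it, and finally verify the explicit formulas of Tables~\ref{auto.table} and \ref{auto.table.inverse}.

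\emph{Existence.} Since $\Zz_7$ is normal in $G_{21}\cong\Zz_7:\Zz_3$, the quotient group $\Zz_3$ acts on $\Pp_{\fake}^2/\Zz_7$, and this action lifts to the minimal resolution $Y$. The \'etale double cover $X\to Y$ is canonically attached to $Y$ — it is the universal cover of the $(2,4)$-elliptic surface, equivalently it corresponds to the unique $\Zz_7$-invariant, hence $\Aut$-invariant, nontrivial $2$-torsion class, as noted after Lemma~\ref{24} — so the $\Zz_3$-action lifts to $X$. A lift $\tilde\phi$ of a generator is well defined up to the deck involution $\sigma$; since $\tilde\phi$ normalizes $\langle\sigma\rangle$, exactly one of $\tilde\phi,\tilde\phi\sigma$ has order $3$, and I would call it $\tau$. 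Because $\tau$ covers the order-three automorphism of $Y$, which is trivial on the base of the elliptic fibration (it acts fiberwise, Section~3), $\tau$ preserves the fibration $X\to\Cc\Pp^1$ and induces on its base an automorphism lying over the identity of the base of $Y$; being of order prime to $2$, this induced automorphism is the identity. In particular $\tau$ fixes each fiber and permutes the twenty-four curves $S_1,\dots,S_2'',A_1,\dots,C_2''$ exactly as the order-three automorphism of $Y$ permutes $S,\dots,C''$: cyclically on each triple $(S_i,S_i',S_i'')$ and on the component triples $(A_i,A_i',A_i'')$, $(B_i,B_i',B_i'')$, $(C_i,C_i',C_i'')$ of the two $I_9$-fibers, compatibly with \eqref{intersections_on_X}.

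\emph{Shape of $\tau^*$.} Since $Y_0=y_0/y_1$ is the affine coordinate of the elliptic pencil — its general level sets are the fibers $H_a$, because the divisor of $Y_0-a$ equals $H_a-2F$, compare the proof of Proposition~\ref{crucial} — it is pulled back from the base, whence $\tau^*Y_0=Y_0$. Thus $\tau^*$ takes the announced form $(Y_0,Y_2,Y_3)\mapsto(Y_0,Y_2',Y_3')$ with $Y_2',Y_3'\in{\mathrm{Rat}}(X)=\Cc(Y_0,Y_2,Y_3)$, realized on the sextic $\pi(X)\subset\Cc\Pp^3$ by a birational self-map. I note that this self-map is genuinely of Cremona type and not linear, since $\tau^*F=F$ and $\tau$ sends $\{S_1,S_2\}$ to $\{S_1',S_2'\}$, so $\tau^*D=3F+S_1'+S_2'\ne D$.

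\emph{Determining $Y_2',Y_3'$.} The restriction of $\tau$ to a general fiber is an automorphism of an elliptic curve, completely pinned down by its effect on a few marked points, and all the relevant marked data are explicit in Table~\ref{curves}: the parametrizations of $\pi(S_1')$ and $\pi(S_1'')$, the points $\pi(S_1)=(0:0:-1:1)$, $\pi(S_2)=(0:0:1:1)$, $\pi(A_1)=(1:1:0:0)$, and the equations of $\pi(A_1'),\pi(A_1''),\dots$. Hence $\tau$ is determined globally. To produce the formulas I would write an ansatz for $Y_2',Y_3'$ as rational functions in $Y_0,Y_2,Y_3$ of bounded degree, impose the curve-matching conditions $\tau\circ\pi(S_1)=\pi(S_1')$, $\tau\circ\pi(S_1')=\pi(S_1'')$, $\tau\circ\pi(A_1')=\pi(A_1'')$ and the analogous relations for the remaining curves, and solve the resulting system; this yields the entries of Table~\ref{auto.table}, and Table~\ref{auto.table.inverse} is obtained as its formal inverse.

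\emph{Verification and the main obstacle.} I would then check three things for the explicit map of Table~\ref{auto.table}: (i) it preserves the sextic, i.e.\ $f(Y_0,1,Y_2',Y_3')$ is a polynomial multiple of $f(Y_0,1,Y_2,Y_3)$, so that $(Y_0,Y_2,Y_3)\mapsto(Y_0,Y_2',Y_3')$ is a well-defined endomorphism of ${\mathrm{Rat}}(X)$; (ii) its composition with the map of Table~\ref{auto.table.inverse} is the identity, so it is an automorphism with cube the identity, and it is not the identity, hence has order exactly $3$; and (iii) it induces the prescribed cyclic action on $\pi(S_1),\pi(S_1'),\pi(S_1'')$ and on the $I_9$-components, which identifies it with the lift $\tau$ constructed above rather than with some other birational self-map of the sextic. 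The hard part will be (iii), reinforced by the well-definedness in (i): the sextic $\pi(X)$ admits many birational self-maps, so the real content is to extract from the geometry of the twenty-four rational curves enough constraints to single out the correct automorphism and to guarantee that the single relation among $Y_0,Y_2,Y_3$ is respected. The underlying polynomial identities — that $f$ pulls back to a multiple of itself, and that the two maps of the tables compose to the identity — are large but mechanical computer-algebra checks.
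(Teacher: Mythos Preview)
Your proposal is correct in outline and reaches the same destination as the paper, but the route to the explicit formulas is different. The paper does not set up a general ansatz and match curves; instead it exploits the pole structure directly: $Y_2$ and $Y_3$ are by construction rational functions on $X$ with pole divisor exactly $3F+S_1+S_2$, so $\tau^*Y_2,\tau^*Y_3$ have pole divisor $3F+S_1'+S_2'$ (respectively $3F+S_1''+S_2''$ for the inverse). This places each transform in a $4$-dimensional linear system already computed in Section~4, and the further conditions that $Y_2+Y_3$ vanishes on $S_1$ and $Y_2-Y_3$ on $S_2$ then pin the transforms down up to constants, which are recovered last. This divisor-theoretic determination is more economical than a curve-matching ansatz and explains why the formulas have the denominators they do. On the verification side, the paper checks symbolically that the formulas define a self-map of the sextic, but explicitly declines to verify the cube relation symbolically (too expensive) and instead confirms it heuristically by iterating on random high-precision points; your plan to verify order three via the composition with Table~\ref{auto.table.inverse} contains a small slip: showing that Table~\ref{auto.table.inverse} is a two-sided inverse only proves bijectivity, not that the cube is the identity --- for that you would need in addition that Table~\ref{auto.table.inverse} equals the square of Table~\ref{auto.table}, or else (as you essentially argue elsewhere) appeal to your abstract existence of $\tau$ of order three together with your identification step~(iii).
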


\begin{table}[htp]
\caption{Automorphism of order 3 $:(Y_0, Y_2, Y_3)\mapsto (Y_0, Y_2', Y_3')$}
\begin{center}
$$
\begin{array}{|ll|}
\hline
Y_2' = &
\frac {(3 +
     \ii \sqrt{7})}8
     \, Y_0^{-1} ((-21 \ii + 31 \sqrt{7}) Y_2^2 + (-35 \ii + 9 \sqrt{7}) Y_3^2))^{-1}
     \\[.4em]&
      \Big(7 (9 \ii + 5 \sqrt{7}) Y_0^4 Y_3 +
     2 Y_0^2 (4 (21 \ii + \sqrt{7}) Y_2^2 - (7 \ii + 11 \sqrt{7}) Y_3) +
     \\[.4em]&
     Y_3 (-49 \ii - 13 \sqrt{7} - (49 \ii + 13 \sqrt{7}) Y_2^2 +
        8 (-7 \ii + 5 \sqrt{7}) Y_3 + (49 \ii + 13 \sqrt{7}) Y_3^2)
         \\[.4em]&   -
     Y_0 ((-21 \ii + 31 \sqrt{7}) Y_2^3 +
        Y_2 Y_3 (112 \ii + 48 \sqrt{7} + 21 \ii Y_3 - 31 \sqrt{7} Y_3))\Big)
   \\[.4em]
Y_3'= &\frac { (-3 \ii + \sqrt{7})}8
   Y_0^{-1} ((-21 \ii + 31 \sqrt{7}) Y_2^2 + (-35 \ii +
         9 \sqrt{7}) Y_3^2))^{-1}
       \\[.4em]&
           \Big((-21 - 31 \ii \sqrt{7}) Y_2^3 +
     Y_0 Y_2^2 (-168 + 8 \ii \sqrt{7} + 49 Y_3 - 13 \ii \sqrt{7} Y_3)
         \\[.4em]&
     +
     Y_0 Y_3^2 (56 + 40 \ii \sqrt{7} - 49 Y_3 + 13 \ii \sqrt{7} Y_3) +
     Y_2 (-21 - 31 \ii \sqrt{7}
         \\[.4em]&
          + 7 (13 + 7 \ii \sqrt{7}) Y_0^4 +
        8 (21 - \ii \sqrt{7}) Y_3 + (21 + 31 \ii \sqrt{7}) Y_3^2
            \\[.4em]&
             +
        Y_0^2 (-70 - 18 \ii \sqrt{7} + (-56 - 40 \ii \sqrt{7}) Y_3))\Big)
            \\[.8em]
            \hline
\end{array}
$$
\end{center}
\label{auto.table}
\end{table}

\begin{table}[htp]
\caption{Inverse automorphism of order 3 $:(Y_0, Y_2, Y_3)\mapsto (Y_0, Y_2'', Y_3'')$}
\begin{center}
$$
\begin{array}{|ll|}
\hline
Y_2'' = &\Big(-20 - 4 \ii \sqrt{7} - 4 \ii (-9 \ii + \sqrt{7}) Y_0^5 Y_2 +
     (34  - 30 \ii \sqrt{7} )Y_3 + (134  + 14 \ii \sqrt{7} )Y_3^2
     \\[.4em]&
      -  (15 - 43 \ii \sqrt{7}) Y_3^3 - 48 Y_3^4
      - (1+ 3 \ii \sqrt{7} )
     Y_3^5 + 4 \ii Y_0^6 (5 \ii - \sqrt{7} + 2 \sqrt{7} Y_3)
       \\[.4em]&
       +
     Y_2^4 (-20 - 4 \ii \sqrt{7} + (-1 - 3 \ii \sqrt{7}) Y_3) +
     2 Y_0^3 Y_2 (36 +
        4 \ii \sqrt[
         7] + (3 + 5 \ii \sqrt{7}) Y_2^2
    \\[.4em]&
           + (-5 + 15 \ii \sqrt{7}) Y_3 +
         (-16 +2\ii  \sqrt{7}) Y_3^2) +
     Y_2^2 (-40 - 8 \ii \sqrt{7} +
        33 (1 - \ii \sqrt{7}) Y_3
         \\[.4em]&
               + (68 + 4 \ii \sqrt{7}) Y_3^2 + (2 +
           6 \ii \sqrt{7}) Y_3^3) +
     2 Y_0^2 (10 + 2 \ii \sqrt{7} +
        8 Y_2^4 + (-26 + 10 \ii \sqrt{7}) Y_3
               \\[.4em]&
         + (-29 -
           9 \ii \sqrt{7}) Y_3^2 + (8 - 4 \ii \sqrt{7}) Y_3^3 -
        Y_2^2 Y_3 (17 + \ii \sqrt{7} + 8 Y_3)) +
     Y_0^4 (20 + 4 \ii \sqrt{7}
            \\[.4em]&
            + 2 (9 + \ii \sqrt{7}) Y_3 +
        4 (3 - \ii \sqrt{7}) Y_3^2 + (7 + 5 \ii \sqrt{7}) Y_3^3 +
        Y_2^2 (-48 + 16 \ii \sqrt{7} - (7+ 5 \ii \sqrt{7}) Y_3))
               \\[.4em]& +
     Y_0 Y_2 (-36 - 4 \ii \sqrt{7} + (5 - \ii \sqrt{7}) Y_2^4 +
        10 (1 - 3 \ii \sqrt{7}) Y_3 + 52 Y_3^2 + (5 - \ii \sqrt{7}) Y_3^4
               \\[.4em]&+
        2 \ii Y_2^2 (13 \ii -
           7 \sqrt{7} + (5 \ii + \sqrt{7}) Y_3^2))
           \Big)/\Big(2 Y_0 (-3 \ii - \sqrt{7} + (3 \ii + \sqrt{7}) Y_0^2 - 2 \ii Y_2^2
             \\[.4em]& + (- 5 \ii + \sqrt{7} )Y_3 +
       (\ii  + \sqrt{7} )Y_3^2 -
       Y_2 (-5 \ii + \sqrt{7} + (-\ii + \sqrt{7}) Y_3))
       \\[.4em]&
        (-3 \ii - \sqrt{7}
        + (3 \ii + \sqrt{7}) Y_0^2 - 2 \ii Y_2^2 - (5 \ii - \sqrt{7} )Y_3 +
       (\ii  + \sqrt{7}) Y_3^2 +
       Y_2 (-5 \ii + \sqrt{7}
       \\[.4em]& +
      (-\ii + \sqrt{7}) Y_3))\Big)
\\[.4em]
Y_3'' =& \Big(8 \ii \sqrt{7} Y_0^6 Y_2 +
     Y_0^5 (-40 - 8 \ii \sqrt{7} + (26 + 2 \ii \sqrt{7}) Y_3) +
     2 Y_0^3 (40 + 8 \ii \sqrt{7}
     \\[.4em]& +
        2 (-17 + 11 \ii \sqrt{7} + (1 - 2 \ii \sqrt{7}) Y_2^2) Y_3 + (-39 -
           11 \ii \sqrt{7}) Y_3^2 + (11 - 3 \ii \sqrt{7}) Y_3^3)
           \\[.4em]&
           +
     2 Y_0^2 Y_2 (-4 \ii \sqrt{7}+ (33 - 3 \ii \sqrt{7}) Y_3 + (25 + 9 \ii \sqrt{7}) Y_3^2 +
        4 \ii (\ii + \sqrt{7}) Y_3^3
        \\[.4em]& +
        4 Y_2^2 (-4 - \ii \sqrt{7} + (1- \ii \sqrt{7}) Y_3)) +
     Y_2 (8 \ii \sqrt{7} + (5 - \ii \sqrt{7}) Y_2^4 +
        2 \ii (27 \ii + \sqrt{7}) Y_3
        \\[.4em]&
         + (-23 - 17 \ii \sqrt{7}) Y_3^2 + (8 -
           8 \ii \sqrt{7}) Y_3^3 + (5 - \ii \sqrt{7}) Y_3^4 +
        Y_2^2 (5 + 7 \ii \sqrt{7} + 8 \ii (\ii + \sqrt{7}) Y_3
        \\[.4em]&
         +
           2 \ii (5 \ii + \sqrt{7}) Y_3^2)) +
     Y_0^4 ((7 - 3 \ii \sqrt{7}) Y_2^3 +
        \ii Y_2 (-8 \sqrt{7} +
           4 (3 \ii + \sqrt{7}) Y_3 + (7 \ii + 3 \sqrt{7}) Y_3^2))
           \\[.4em]& +
     Y_0 (-40 - 8 \ii \sqrt{7} + (42 - 46 \ii \sqrt{7}) Y_3 +
        2 (83 + 7 \ii \sqrt{7}) Y_3^2 + (-14 + 46 \ii \sqrt{7}) Y_3^3
        \\[.4em]& -
        48 Y_3^4 + (-1 - 3 \ii \sqrt{7}) Y_3^5 +
        Y_2^4 (-4 - 4 \ii \sqrt{7} + (-1 - 3 \ii \sqrt{7}) Y_3)
        \\[.4em]&
        +
        2 Y_2^2 (-44 +
           4 \ii \sqrt{7} + (-6 - 16 \ii \sqrt{7}) Y_3 + (26 +
              2 \ii \sqrt{7}) Y_3^2 + (1 +
              3 \ii \sqrt{7}) Y_3^3))\Big)/
              \\[.4em]&
              \Big(2 Y_0 (-3 \ii - \sqrt{7}+ (3 \ii + \sqrt{7}) Y_0^2 - 2 \ii Y_2^2 - (5 \ii - \sqrt{7} )Y_3 +
       (\ii  + \sqrt{7}) Y_3^2
       \\[.4em]& -
       Y_2 (-5 \ii + \sqrt{7} + (-\ii + \sqrt{7}) Y_3)) (-3 \ii - \sqrt{7} + (3 \ii + \sqrt{7}) Y_0^2 - 2 \ii Y_2^2 - (5 \ii - \sqrt{7} )Y_3
       \\[.4em]&+
       (\ii + \sqrt{7} )Y_3^2 +
       Y_2 (-5 \ii + \sqrt{7} + (-\ii + \sqrt{7}) Y_3))\Big)\\[.8em]\hline
\end{array}
$$
\end{center}
\label{auto.table.inverse}
\end{table}

\begin{remark}
While formulas of Tables \ref{auto.table} and \ref{auto.table.inverse} are not particularly inspiring, they are far preferable to some other formulas for the
automorphism that we initially found.
\end{remark}

\begin{proof}
It is a straightforward computer calculation to check that the formulas provide automorphisms. However, it takes too long to verify that the cube of it
is identity symbolically. It is, however, trivial to do so heuristically by taking a random point on $\pi(X)$ calculated to high precision and iterating
the automorphism three times.

\smallskip
To find the automorphism we used the fact that $Y_2$ and $Y_3$ are rational functions with poles along $3F+S_1+S_2$. So their transforms
should be rational functions with poles along $3F+S_1'+S_2'$ and $3F+S_1''+S_2''$. We also know that $Y_2+Y_3$ is zero on $S_1$ and
$Y_2-Y_3$ is zero on $S_2$. This allows us to fix the transforms up to constants, which can then be recovered.
\end{proof}

\section{Double cover of the fake projective plane.}
%6. Sevenfold cover of the double cove: the field of the double cover of fake projective plane.
In this section we explain how we found the function field of the fake projective plane.

\medskip
According to \cite{K11} we need to attach the seventh root of the rational function which has divisor
$$
5S + B + 4C + 6S' + 4B' + 2C' + 3S''+2B''+C''
$$
up to multiples of $7$. { This divisor is divisible by 7 in the Picard group and corresponds to the third possibility for the divisor $B$ in [ibid., p 1676], where the curves $A_1, A_2, E_1, B_1, B_2, E_2, C_1, C_2, E_3$ correspond to $C'', B'', S'', C', B', S', C, B, S$ in our notation. The first possibility for $B$ was ruled out, because the $I_9$-fibre has multiplicity $\mu=1$  by [\cite{K17}, p 2 and Theorem 2.3 (5)], and the second possibility corresponds to Case 1 of \eqref{cases} which was ruled out in Section 3.
}
 We found this function by looking at the equation of the cubic cone with vertex $(0:0:1:1)$ that contains
$S_1''$ and $S_1'$. When divided by $y_1^3$ it
gives a divisor whose zeros and poles occur only at the named divisors. By symmetrizing it via $\sigma$ and using
the automorphism we were able to get the desired function. We denote the seventh root of this function by $z$; the function $z^7$ is given in
Table \ref{z.7}.

\begin{table}[htp]
\caption{Formula for $z^7$}
\begin{center}
$$
\begin{array}{|ll|}
\hline
z^7=&
\Big((-315\ii + 47\sqrt{7})^2 (-1 + Y_0^2)^5 (2795 \ii + 287 \sqrt{7} - 5590 \ii Y_0 -
      574 \sqrt{7} Y_0 + 11573 \ii Y_0^2 + 2689 \sqrt{7} Y_0^2
       \\[.4em]& -
      17556 \ii Y_0^3 - 4804 \sqrt{7} Y_0^3 + 14357 \ii Y_0^4 +
      5601 \sqrt{7} Y_0^4 - 11158 \ii Y_0^5 - 6398 \sqrt{7} Y_0^5
       \\[.4em]& +
      5579 \ii Y_0^6 + 3199 \sqrt{7} Y_0^6 + 5590 \ii Y_2^2
    +
      574 \sqrt{7} Y_2^2 - 5590 \ii Y_0 Y_2^2 - 574 \sqrt{7} Y_0 Y_2^2
       \\[.4em]& +
      5994 \ii Y_0^2 Y_2^2 - 510 \sqrt{7} Y_0^2 Y_2^2 + 5590 \ii Y_0^3 Y_2^2
      +
      574 \sqrt{7} Y_0^3 Y_2^2 + 2795 \ii Y_2^4 + 287 \sqrt{7} Y_2^4
       \\[.4em]&+
      1616 \ii Y_3 - 4336 \sqrt{7} Y_3 + 5568 \ii Y_0 Y_3
    +
      5824 \sqrt{7} Y_0 Y_3 + 3232 \ii Y_0^2 Y_3 - 8672 \sqrt{7} Y_0^2 Y_3
         \\[.4em]&  -
      448 \ii Y_0^3 Y_3 + 11584 \sqrt{7} Y_0^3 Y_3 - 9968 \ii Y_0^4 Y_3
     -
      4400 \sqrt{7} Y_0^4 Y_3 + 11584 \ii Y_0^2 Y_2 Y_3
        \\[.4em]&+
      64 \sqrt{7} Y_0^2 Y_2 Y_3 - 17600 \ii Y_0^3 Y_2 Y_3
       +
      5696 \sqrt{7} Y_0^3 Y_2 Y_3 + 1616 \ii Y_2^2 Y_3
      \\[.4em]&-
      4336 \sqrt{7} Y_2^2 Y_3 + 7184 \ii Y_0 Y_2^2 Y_3
      +
      1488 \sqrt{7} Y_0 Y_2^2 Y_3 - 17174 \ii Y_3^2 - 638 \sqrt{7} Y_3^2
       \\[.4em]&+
      11606 \ii Y_0 Y_3^2 - 5186 \sqrt{7} Y_0 Y_3^2 - 5994 \ii Y_0^2 Y_3^2
        +
      510 \sqrt{7} Y_0^2 Y_3^2 + 5994 \ii Y_0^3 Y_3^2
      \\[.4em]& -
      510 \sqrt{7} Y_0^3 Y_3^2 - 5590 \ii Y_2^2 Y_3^2
      -
      574 \sqrt{7} Y_2^2 Y_3^2 - 1616 \ii Y_3^3 + 4336 \sqrt{7} Y_3^3
        \\[.4em]& -
      7184 \ii Y_0 Y_3^3 - 1488 \sqrt{7} Y_0 Y_3^3 + 2795 \ii Y_3^4
     +
      287 \sqrt{7} Y_3^4) (2795 \ii + 287 \sqrt{7} + 5590 \ii Y_0
        \\[.4em]&+
      574 \sqrt{7} Y_0 + 11573 \ii Y_0^2 + 2689 \sqrt{7} Y_0^2
       +
      17556 \ii Y_0^3 + 4804 \sqrt{7} Y_0^3 + 14357 \ii Y_0^4
       \\[.4em]&+
      5601 \sqrt{7} Y_0^4 + 11158 \ii Y_0^5 + 6398 \sqrt{7} Y_0^5
       +
      5579 \ii Y_0^6 + 3199 \sqrt{7} Y_0^6 + 5590 \ii Y_2^2
       \\[.4em]&+
      574 \sqrt{7} Y_2^2 + 5590 \ii Y_0 Y_2^2 + 574 \sqrt{7} Y_0 Y_2^2
     +
      5994 \ii Y_0^2 Y_2^2 - 510 \sqrt{7} Y_0^2 Y_2^2 - 5590 \ii Y_0^3 Y_2^2
         \\[.4em]&-
      574 \sqrt{7} Y_0^3 Y_2^2 + 2795 \ii Y_2^4 + 287 \sqrt{7} Y_2^4
     +
      1616 \ii Y_3 - 4336 \sqrt{7} Y_3 - 5568 \ii Y_0 Y_3
        \\[.4em]&-
      5824 \sqrt{7} Y_0 Y_3 + 3232 \ii Y_0^2 Y_3 - 8672 \sqrt{7} Y_0^2 Y_3
     +
      448 \ii Y_0^3 Y_3 - 11584 \sqrt{7} Y_0^3 Y_3
         \\[.4em]&- 9968 \ii Y_0^4 Y_3
      -
      4400 \sqrt{7} Y_0^4 Y_3 - 11584 \ii Y_0^2 Y_2 Y_3
       -
      64 \sqrt{7} Y_0^2 Y_2 Y_3 - 17600 \ii Y_0^3 Y_2 Y_3
      \\[.4em]&+
      5696 \sqrt{7} Y_0^3 Y_2 Y_3 + 1616 \ii Y_2^2 Y_3
      -
      4336 \sqrt{7} Y_2^2 Y_3 - 7184 \ii Y_0 Y_2^2 Y_3
        \\[.4em]& -
      1488 \sqrt{7} Y_0 Y_2^2 Y_3 - 17174 \ii Y_3^2 - 638 \sqrt{7} Y_3^2
        -
      11606 \ii Y_0 Y_3^2 + 5186 \sqrt{7} Y_0 Y_3^2
      \\[.4em]&
       - 5994 \ii Y_0^2 Y_3^2 +
      510 \sqrt{7} Y_0^2 Y_3^2 - 5994 \ii Y_0^3 Y_3^2
       +
      510 \sqrt{7} Y_0^3 Y_3^2
      \\[.4em]&
      - 5590 \ii Y_2^2 Y_3^2 -
      574 \sqrt{7} Y_2^2 Y_3^2 - 1616 \ii Y_3^3 + 4336 \sqrt{7} Y_3^3
      +
      7184 \ii Y_0 Y_3^3
       \\[.4em]&
       + 1488 \sqrt{7} Y_0 Y_3^3 + 2795 \ii Y_3^4 +
      287 \sqrt{7} Y_3^4)\Big)/\Big(4096 Y_0^4 (-4 \ii
      +
      4 \ii Y_0
       \\[.4em]&+ 4 \ii Y_0^2 - 4 \ii Y_0^3 + 2 \ii Y_2 - 2 \sqrt{7} Y_2 -
      2 \ii Y_0 Y_2 + 2 \sqrt{7} Y_0 Y_2 + \ii Y_2^2 + \sqrt{7} Y_2^2 - 2 \ii Y_3
       \\[.4em]&+
      2 \sqrt{7} Y_3 + 2 \ii Y_0 Y_3 - 2 \sqrt{7} Y_0 Y_3 - 2 \ii Y_2 Y_3 -
      2 \sqrt{7} Y_2 Y_3 + \ii Y_3^2 + \sqrt{7} Y_3^2)^2 (-4 \ii - 4 \ii Y_0
       \\[.4em]&+
      4 \ii Y_0^2 + 4 \ii Y_0^3 - 2 \ii Y_2 + 2 \sqrt{7} Y_2 - 2 \ii Y_0 Y_2 +
      2 \sqrt{7} Y_0 Y_2 + \ii Y_2^2 + \sqrt{7} Y_2^2 - 2 \ii Y_3
       \\[.4em]&+
      2 \sqrt{7} Y_3 - 2 \ii Y_0 Y_3 + 2 \sqrt{7} Y_0 Y_3 + 2 \ii Y_2 Y_3 +
      2 \sqrt{7} Y_2 Y_3 + \ii Y_3^2 + \sqrt{7} Y_3^2)^2 (-21 \ii Y_2^2
       \\[.4em]& +
      31 \sqrt{7} Y_2^2 - 35 \ii Y_3^2 + 9 \sqrt{7} Y_3^2)^2\Big )
      \\[.8em]
      \hline
\end{array}
$$
\end{center}
\label{z.7}
\end{table}

\medskip
To find the function field of the fake projective plane we simply need to take the invariants with respect to $\sigma$ that preserves $z$ and $Y_3$ and
negates $Y_0$ and $Y_2$.

\medskip
We also found a lift of the action of the order three automorphism to the field generated by $Y_0,Y_2,Y_3,z$. Specifically, the action on $z$ is given in Table \ref{z.action}.

\begin{table}[htp]
\caption{Automorphism of order 3 $:(Y_0, Y_2, Y_3, z)\mapsto (Y_0, Y_2', Y_3', z'')$}
\begin{center}
$$
\begin{array}{|ll|}
\hline
z''=&z^2
(-1 + Y_0^2)^{-3} (1 - Y_0 - Y_0^2 + Y_0^3 -
\frac  12 ( 1+ \ii \sqrt{7}) (Y_2 - Y_3)
   \\[.4em]&
    +
   \frac 12 (1 + \ii \sqrt{7}) Y_0 (Y_2 - Y_3)
   +
   \frac 14 (-1 +\ii \sqrt{7}) (Y_2 - Y_3)^2) (-1 - Y_0 + Y_0^2 + Y_0^3
    \\[.4em]&
    -
   \frac 12  (1+ \ii \sqrt{7}) (Y_2 + Y_3)
       -
   \frac 12  (1 +\ii \sqrt{7}) Y_0 (Y_2 + Y_3) +
   \frac 14(1 - \ii \sqrt{7}) (Y_2 + Y_3)^2)
\\
\hline
\end{array}
$$
\end{center}
\label{z.action}
\end{table}

\section{Embedding of the fake projective plane into $\Cc\Pp^9$}
% 7. Constructing embeddings of fake projective plane. Specifically, 6H.
Let us now describe the method that allowed us to construct the equations of the fake projective plane.

\medskip
By a Riemann-Roch calculation, the dimension of the bicanonical linear system on  $\Pp^2_{\fake}$ is $10$.

\medskip
The pullback of the ($\Qq$-Cartier) canonical divisor via $\mu:Y\to\Pp^2_{\fake}/\Zz_7$ satisfies
\begin{equation}\label{pullback}
K_Y=\mu^* K_{\Pp^2_{\fake}/\Zz_7} -\frac 37 (S+S'+S'') - \frac 27 (B+B'+B'') - \frac 17 (C+C'+C'').
\end{equation}
This shows that the preimage of $\mu(F_Y)$ on $\Pp^2_{\fake}$ is numerically equivalent to a canonical divisor.
(It is actually a section of a canonical line bundle twisted by an invertible torsion line bundle). In particular, to
calculate
$$
H^0(\Pp^2_{\fake}, 2 K_{\Pp^2_{\fake}})
$$
we can look for rational functions on $\Pp^2_{\fake}$ which have poles of order at most two on the curve $F^{FPP}$ which is
the preimage of $\mu(F_Y)$ and no other poles.

\medskip
The action of $\Zz_7$ splits the space of such functions into seven eigenspaces. Each eigenspace consists of functions
of the form $z^i g$ where $g$ is a function from the function field of $Y$, as $i$ runs over residues modulo $7$.
The residual $\Zz_3$ action allows us to reduce the calculation to that of $i=-1,0,1$.

\medskip
The $i=0$ case is easy. The only such function up to scaling is $1$.

\medskip
Now let us calculate such functions of the form $z g$. Consider the Cartesian product diagram below
$$
\begin{array}{ccc}
\widehat{\Pp^2_{\fake} }&\to &Y\\
 \downarrow&&\downarrow~\\
\Pp^2_{\fake} &\to &\Pp^2_{\fake}/\Zz_7
\end{array}
$$
where $\widehat{\Pp^2_{\fake} }$ is the singular Galois cover of $Y$ ramified at the nine curves $S,\ldots, C''$ given
by normalization of $Y$ in the field of fractions of $\Pp^2_{\fake}$. We can calculate the global sections of an invertible sheaf
on $\Pp^2_{\fake}$ in terms of the pullback of these sections on $\widehat{\Pp^2_{\fake} }$.

\medskip
In view of \eqref{pullback} we see that the pullback of $2F^{FPP}$ on $\widehat{\Pp^2_{\fake} }$ is equal to twice its
proper preimage $\widehat{F^{FPP}}$ plus
$$
 \frac 67 (S+S'+S'')+ \frac 47 (B+B'+B'')  + \frac 27  (C+C'+C'').
$$
where $\frac 17 S$ is the reduced preimage of $S$ under $\widehat{\Pp^2_{\fake} }\to Y$, and similarly for the other eight curves.
The divisor of $z$ on  $\widehat{\Pp^2_{\fake} }$ is
$$
-A + A' + \frac  5 7 S  -\frac 1 7 S' -\frac 4 7 S''+ \frac 17 B+\frac 47 B' -\frac 57 B'' +\frac  47 C  +  \frac 27 C'  -\frac 67 C'' .
$$
This means that the divisor of $g$ on   $\widehat{\Pp^2_{\fake} }$ must be greater or equal to
$$
-2\widehat{F^{FPP}} - \frac 67 (S+S'+S'')- \frac 47 (B+B'+B'')  - \frac 27  (C+C'+C'')-{\mathrm div}(z)
$$
$$
=-2\widehat{F^{FPP}}+A-A' - \frac  {11} 7 S  -\frac 5 7 S' -\frac 2 7 S''- \frac 57 B -\frac 87 B' + \frac 17 B'' - \frac  67 C  -  \frac 47 C'  +\frac 47 C'' .
$$
Since $g$ is a rational function on $Y$, this translates into the condition that the divisor of $g$ on $Y$ is greater or equal than
$$
-2F_Y + A - A' - S -B' +B''+C'',
$$
in other words, it can be computed as a global section of the invertible sheaf
$$
\Oo_Y(2F_Y+S-A+A'+B'-B''-C'')
$$
on $Y$, or equivalently $\sigma$-invariant sections of
$$
\Oo_X(2F+S_1+S_2-A_1-A_2+A_1'+A_2'+B_1'+B_2'-B_1''-B_2''-C_1''-C_2'').
$$

\medskip
Note that the rational function $Y_0^2-1$ on $Y$
has pole of order $2$ at $F_Y$ and zeros of order $1$ at the nine curves $A,\ldots,C''$ of the $I_9$ fiber.
As a result, the $\sigma$-invariant section $y_0^2-y_1^2$ of $H^0(X,2D)$ is $2F+I_9+2S_1+2S_2$.
Since
$$
(2F_Y+I_9+2S) -(2F_Y -A+A'+B'-B''-C''+S) = S + 2A+A''+B+2B''+C+C'+2C'',
$$
we can find  $\sigma$-invariant sections of
$$
\Oo_X(2F+S_1+S_2-A_1-A_2+A_1'+A_2'+B_1'+B_2'-B_1''-B_2''-C_1''-C_2'').
$$
by looking at
$\sigma$-invariant sections of $2D$ which vanish on $(S + 2A+A''+B+2B''+C+C'+2C'')$. By using the calculation of Table \ref{curves}
it can be seen that such sections are multiples of $y_2^2-y_3^2$, so the rational function in question is
$$
\frac {(y_2^2-y_3^2)z}{y_0^2-y_1^2},
$$
up to a multiplicative constant.

%{\bf Maybe comment how the higher cohomology of the line bundle on  $\widehat{\Pp^2_{fake} }$ are zero, which means the same for the pushforward,
%thus giving that the dimension of the space of global sections coincides with the Euler characteristics, which is one.}

\medskip
Similarly, for the $ z^{-1}g$, we end up looking at $g$ which are global sections of
$$
\Oo_Y( 2F_Y+A-A'-C+B''+C''+S'+S'').
$$
We can construct these functions as
$$
\frac{(y_0^2-y_1^2)r(y_0,y_1,y_2,y_3)}{f_{cones}(y_0,y_1,y_2,y_3)}
$$
where $r(y_0,y_1,y_2,y_3)$ is a sigma-invariant section of
$
H^0(X,4D)
$
and $f_{cones}$ is given in Remark \ref{cones}.
{ 
The denominator $f_{cones}$ } is a $\sigma$-invariant element of $H^0(X,6D)$ which vanishes on $S'+S''$ given by
$$
\Big(
y_0^3 - y_0^2 y_1 - y_0 y_1^2 + y_1^3 +
 \frac 12 (1 + \ii \sqrt{7}) (y_0-y_1) y_1 (y_2 - y_3)
 +
 \frac 14  (-1 + \ii\sqrt{7}) y_1 (y_2 - y_3)^2
 \Big)
$$
$$
\Big(
y_0^3 + y_0^2 y_1 - y_0 y_1^2 - y_1^3 -
 \frac 12 (1 + \ii \sqrt{7}) (y_0+y_1) y_1 (y_2 + y_3)
 -
 \frac 14  (-1 + \ii\sqrt{7}) y_1 (y_2 + y_3)^2
 \Big).
$$
We know that the section $(y_0^2-y_1^2)$ of $H^0(Y,D)$ has divisor
$2F_Y+2S+I_9$ where $I_9=A+\ldots+C''$ is the sum of the curves in the $I_9$ fiber.
As a result,
the section $r$ should be vanishing on
$$
(4F_Y+4S+2I_9)+
(2A-3A'+A'' - B - B' +2B'' -2C+2C''-2S+S'+S'')
$$
$$
- (2F_Y+A-A'-C+B''+C''+S'+S'')
$$
$$
=2F_Y+2S+3A'+3A''+B+B'+3B''+C+2C'+3C''.
$$
Importantly, we need to use not just polynomial $r$ but also elements of the normalization, namely products of $\sigma$-antiinvaritant
degree two polynomials in $y_i$ with $\hat y_4$ and $\hat y_5$ from Proposition \ref{normalization}.

\medskip
This is a rather delicate calculation that led us to the results in Table \ref{R1R2}. Note that these functions are only determined up to linear changes
of variables. We have reduced the ambiguity a bit by requiring that the first of these sections vanishes at the fixed points of $\Zz_7$ action
on $\Pp^2_{\fake}$ and have chosen constants in a noble but not very successful attempt to make the equations more palatable.

\begin{table}[htp]
\caption{Rational functions $z^{-1}g$}
\begin{center}
$$
\begin{array}{|l|}
\hline
\Big(4 \ii (-1 + Y_0) (1 + Y_0) (-266 \ii Y_0 + 34 \sqrt{7} Y_0 + 532 \ii Y_0^3 -
      68 \sqrt{7} Y_0^3 - 266 \ii Y_0^5 + 34 \sqrt{7} Y_0^5
       \\[.4em]
        - 70 \ii Y_2 +
      46 \sqrt{7} Y_2 - 126 \ii Y_0^2 Y_2 - 58 \sqrt{7} Y_0^2 Y_2 +
      196 \ii Y_0^4 Y_2 + 12 \sqrt{7} Y_0^4 Y_2 - 469 \ii Y_0 Y_2^2
        \\[.4em]
        +
      97 \sqrt{7} Y_0 Y_2^2 - 63 \ii Y_0^3 Y_2^2 - 29 \sqrt{7} Y_0^3 Y_2^2 -
      70 \ii Y_2^3 + 46 \sqrt{7} Y_2^3 + 238 \ii Y_0 Y_3 + 266 \sqrt{7} Y_0 Y_3
      \\[.4em]
         -
      238 \ii Y_0^3 Y_3 - 266 \sqrt{7} Y_0^3 Y_3 + 259 \ii Y_2 Y_3 +
      41 \sqrt{7} Y_2 Y_3 - 259 \ii Y_0^2 Y_2 Y_3 - 41 \sqrt{7} Y_0^2 Y_2 Y_3
      \\[.4em]
+
      56 \ii Y_0 Y_2^2 Y_3 + 104 \sqrt{7} Y_0 Y_2^2 Y_3 + 728 \ii Y_0 Y_3^2 -
      56 \sqrt{7} Y_0 Y_3^2 - 196 \ii Y_0^3 Y_3^2 - 12 \sqrt{7} Y_0^3 Y_3^2
            \\[.4em]+
      70 \ii Y_2 Y_3^2 - 46 \sqrt{7} Y_2 Y_3^2 - 56 \ii Y_0 Y_3^3 -
      104 \sqrt{7} Y_0 Y_3^3)\Big)/\Big((-35 \ii + 23 \sqrt{7}) Y_0 (4 - 4 Y_0 -
      4 Y_0^2
            \\[.4em]
+ 4 Y_0^3 - 2 Y_2 -
      2 \ii \sqrt{7} Y_2 + (2 + 2 \ii \sqrt{7}) Y_0 (Y_2 - Y_3) + 2 Y_3 +
      2 \ii \sqrt{7} Y_3 +
      \ii (\ii + \sqrt{7}) (Y_2- Y_3)^2)
            \\[.4em]
(-4 \ii - 4 \ii Y_0 +
      4 \ii Y_0^2 + 4 \ii Y_0^3 - 2 \ii Y_0 Y_2 + 2 \sqrt{7} Y_0 Y_2 - 2 \ii Y_0 Y_3 +
      2 \sqrt{7} Y_0 Y_3 +  2 (-\ii + \sqrt{7})
            \\[.4em]
     (Y_2 + Y_3) + (\ii + \sqrt{7}) (Y_2 +
         Y_3)^2) z\Big)\\[.8em]\hline
\Big(16 \ii (-1 + Y_0) (1 + Y_0) (-133 \ii + 17 \sqrt{7} + 266 \ii Y_0^2 -
      34 \sqrt{7} Y_0^2
      - 133 \ii Y_0^4 + 17 \sqrt{7} Y_0^4
                  \\[.4em]
                  - 133 \ii Y_0 Y_2 +
      17 \sqrt{7} Y_0 Y_2
      + 133 \ii Y_0^3 Y_2 - 17 \sqrt{7} Y_0^3 Y_2 -
      217 \ii Y_2^2 + 37 \sqrt{7} Y_2^2 - 49 \ii Y_0^2 Y_2^2
      \\[.4em]
 -
      3 \sqrt{7} Y_0^2 Y_2^2 + 119 \ii Y_3 + 133 \sqrt{7} Y_3 -
      119 \ii Y_0^2 Y_3 - 133 \sqrt{7} Y_0^2 Y_3 + 217 \ii Y_3^2 -
      37 \sqrt{7} Y_3^2
      \\[.4em]
+ 49 \ii Y_0^2 Y_3^2 +
      3 \sqrt{7} Y_0^2 Y_3^2)\Big)/\Big((-35 \ii + 23 \sqrt{7}) (4 - 4 Y_0 -
      4 Y_0^2 + 4 Y_0^3 - 2 Y_2 -
      2 \ii \sqrt{7} Y_2
      \\[.4em]
+ (2 + 2 \ii \sqrt{7}) Y_0 (Y_2 - Y_3) + 2 Y_3 +
      2 \ii \sqrt{7} Y_3 +
      \ii (\ii + \sqrt{7}) (Y_2 -  Y_3)^2) (-4 \ii - 4 \ii Y_0 +
      4 \ii Y_0^2
      \\[.4em]
+ 4 \ii Y_0^3 - 2 \ii Y_0 Y_2
+ 2 \sqrt{7} Y_0 Y_2 - 2 \ii Y_0 Y_3 +
      2 \sqrt{7} Y_0 Y_3 +
      2 (-\ii + \sqrt{7}) (Y_2 + Y_3)
      \\[.4em]
+ (\ii + \sqrt{7}) (Y_2+
         Y_3)^2) z\Big)\\
         \hline
         \end{array}
$$
\end{center}
\label{R1R2}
\end{table}

\medskip
The rational functions we have constructed so far lead to the variables $U_0,U_1,U_4,U_7$ of Theorem \ref{main}. The other sections are obtained by
applying the order three automorphism. We used Mathematica to tabulate numerically several dozens points on $\Pp^2_{\fake}$ by first picking random
values for $Y_2$ and $Y_3$, then solving for (one of the) values of $Y_0$, then solving for one of the values of $z$ by taking a seventh root of $z^7$.
Then we looked for degree two and three polynomial equations that vanish on these points. Mathematica is able to work with these numerical approximations by keeping accuracy estimates. As a result, it can give solutions of expected dimension to linear system whose coefficients are only
known approximately by assuming that all minors within the accuracy bound of zero are in fact zero. After finding approximations of the resulting expressions by algebraic numbers, we arrived at $84$ degree three equations
of Theorem \ref{main}

%{\bf Should calculate just for kicks the equation on $U_0,U_1,U_2,U_3$. It could provide a decent looking birational model. Nope, nothing in low degrees.}

\section{Concluding remarks and open questions.}
We have also calculated $147$ degree seven equations among sections of { $4H$} on the unramified double cover of $\Pp^2_{\fake}$. There were no degree six equations.

\end{document}